\definecolor{c0}{rgb}{0.0,0.9,0.9}
\definecolor{c1}{rgb}{0.0,0.5,0.6}
\definecolor{c2}{rgb}{0.0,0.0,0.5}
\definecolor{c3}{rgb}{0.5,0.0,0.5}
\definecolor{grn}{rgb}{0,0.4,0}
\definecolor{dgrn}{rgb}{0.0,0.5,0.0}
\definecolor{dblu}{rgb}{0.0,0.1,0.5}
\definecolor{dpur}{rgb}{0.7,0.0,0.7}
\definecolor{dpurb}{rgb}{0.3,0.0,0.4}
\newenvironment{customthm}[1]
{\innercustomthm}
  {\endinnercustomthm}
\numberwithin{equation}{section}
\theoremstyle{definition}
\newtheorem{prop}{\textcolor{c1}{Proposition}}
\newtheorem{lemma}[prop]{\textcolor{c1}{Lemma}}
\newtheorem{cor}[prop]{\textcolor{c1}{Corollary}}
\numberwithin{prop}{section}
\newtheorem{defn}[prop]{\textcolor{c1}{Definition}}
\newtheorem{rmk}[prop]{\textcolor{c1}{Remark}}
\newcommand{\del}{\partial}
\newcommand{\brs}[1]{\left| #1 \right|}
\newcommand{\brk}[1]{\left[ #1 \right]}
\newcommand{\prs}[1]{\left( #1 \right)}
\newcommand{\ip}[1]{\left<#1\right>}
\newcommand{\gG}{\Gamma}
\newcommand{\gY}{\Upsilon}
\newcommand{\gs}{\sigma}
\newcommand{\gw}{\omega}
\newcommand{\N}{\nabla}
\newcommand{\hsp}{\hspace{0.5cm}}
\DeclareMathOperator{\Rc}{Rc}
\DeclareMathOperator{\Rm}{Rm}
\DeclareMathOperator{\End}{End}
\DeclareMathOperator{\cP}{P}
\DeclareMathOperator{\Id}{Id}
\DeclareMathOperator{\cV}{V}
\begin{document}
\title[Almost Hermitian Ricci flow]{Almost Hermitian Ricci flow}
\date{\today}
\author{Casey Lynn Kelleher}
\address{702 Fine Hall\\
         Princeton University\\
         Princeton, NJ 08540}
         \author{Gang Tian}
\address{BICMR and SMS \\ Peking University \\ Beijing, P.R.China, 100871
}
\begin{abstract} We introduce a new curvature flow which
matches with the Ricci flow on metrics and preserves the almost
Hermitian condition. This enables us to use Ricci flow to study almost
Hermitian manifolds.
\end{abstract}
\thanks{The first author was supported by a National Science Foundation postdoctoral fellowship and the second author is partially supported by NSFC grant 11890660.}
\maketitle
\section{Introduction}
The Ricci flow, introduced by R. Hamilton in \cite{Hamilton1}, has proven to be a very useful tool in geometry and topology. Evidence of its utility can be seen through its application in Perelman's solutions of the Poincar\'{e} conjecture and the Geometrization conjecture for $3$-manifolds \cite{Perelman, Perelman1}. To exploit the power of this analytic tool, it is advantageous to identify settings where the flow respects and interacts well with the present geometric structures. For instance, the Ricci flow has been applied to manifolds with various positivity conditions on curvature (eg. manifolds with $1/4$-pinch curvature, \cite{AN, BS, BS1}). Another natural setting is on K\"{a}hler manifolds, where the Ricci flow has been applied to obtain numerous results. Given this, the Ricci flow proves a promising tool for the Analytic Minimal Model Program in the mission to classify compact K\"ahler manifolds birationally \cite{SongT}.

The use of the Ricci flow on K\"{a}hler manifolds so far indicates that, to some degree, the Ricci flow respects and ties in with the delicate interplay of a Riemannian metric and almost complex structure. How far does this utility go? Over the years it has been generally believed that the Ricci flow does not preserve non-K\"{a}hler Hermitian structures. Throughout the literature there are examples of flows on non-K\"{a}hler manifolds (e.g. almost K\"{a}hler \cite{ST} and Hermitian \cite{ST1, Gill}) which apply techniques inspired by the Ricci flow to complex manifolds. In these settings, the metric flow must be modified to flow by some type of curvature quantity rather than the Ricci tensor. This modification helps ensure the flow preserves whatever desired structure is at play. As such, it was yet unclear to what extent the Ricci flow itself could be employed in such settings.

In contrast to the highly structured K\"{a}hler manifold, an almost Hermitian manifold is simply an almost complex manifold with compatibility imposed between the metric and complex structure. In this short paper, we demonstrate that given an almost complex manifold, by coupling with a flow on almost complex structures, the Ricci flow in fact \emph{does} preserve this compatibility. This opens doors to applying the techniques of Ricci flow to studying properties of almost Hermitian structures with an eye towards identifying new classes of manifolds.
\subsection{Outline of paper and statement of main results}
Given an almost Hermitian manifold $(M,g,J)$ with Riemannian metric $g$ and almost complex structure $J$, we denote by $\omega$ its K\"ahler form and by $\nabla$ the associated Chern connection.
We will define the \emph{Almost Hermitian Ricci flow} to be a paired flow of the metric and almost complex structure given by
\begin{align}
\begin{split}\label{eq:AHCF}
\begin{cases}
\prs{\tfrac{\del g}{\del t}}_{bc} &= - 2 \Rc_{bc} \ \\
\prs{\tfrac{\del J}{\del t}}_{a}^c &= - \prs{\cP_{av} - 2 J_a^y \Rc_{yv} }^{2,0+0,2} g^{vc} 
-\prs{\prs{\mathcal{L}_{\vartheta} J}^m_a g_{mv}}_{\text{skew}} g^{vc} +\prs{\kappa_{av} + 2 \N^e  \prs{d \gw}_{eav}^{3,0+0,3}}g^{vc} . \end{cases}
\end{split}
\end{align}
Here, $\cP$ denotes the Chern Ricci tensor, $\Rc$ denotes the Ricci tensor, and $\kappa$ is a lower order $J$-antiinvariant skew symmetric two cotensor.
The first collection of terms in the evolution of the almost complex structure is in fact the main operator in the symplectic curvature flow setting (cf. \cite{ST} pp.182). When viewed in the almost Hermitian setting (rather than almost K\"{a}hler), the first collection of terms may not assure that \eqref{eq:AHCF} is parabolic modulo diffeomorphisms except in dimension $4$. This is why we need to modify by the addition of an extra term in order to create a flow in arbitrary dimension with desired behaviour. Note the resultant flow of the symplectic form is given by
\begin{align*}
\prs{\tfrac{\del \gw}{\del t}}_{ab} &= - \cP_{ab} + \underset{\mathcal{C}}{\underbrace{\prs{\cP^{1,1} - 2 J_a^c \Rc^{1,1}_{bc}} -\prs{\prs{\mathcal{L}_{\vartheta} J}^m_a g_{mb}}_{\text{skew}} + \kappa_{ab} + 2 \N^e  \prs{d \gw}_{eab}^{3,0+0,3}}}.
\end{align*}
Note in particular that if we define
\begin{align*}
\widetilde{\omega}_t \triangleq \omega_t - \int_0^t \mathcal{C}(s)\, ds,
\end{align*}
it follows that $\tfrac{\del \widetilde{\omega}}{\del t} = - \cP$, and thus $\widetilde{\omega}$ stays closed along the flow.
Here is our main result.
\begin{customthm}{A}
For $(M^n, g, J)$, the almost Hermitian Ricci flow is a parabolic flow modulo diffeomorphisms
which preserves the almost Hermitian condition.
\end{customthm}
\section{Background on almost Hermitian manifolds}
In this section we describe baseline concepts for almost Hermitian manifolds. In the first subsection we discuss key objects, while in the following subsection we use these objects to construct computational quantities and analyse their behaviours.
\subsection{Basics}
We begin with a Riemannian manifold $(M,g)$ equipped with an almost complex structure $J$ (that is, $J \in \End (TM)$ such that $J^2 = -\Id$). An almost Hermitian manifold satisfies $(M,g,J)$ satisfies the additional condition that $J$ is $g$-compatible, namely that
\begin{align}
\begin{split}\label{eq:gcompwdefn}
g \prs{X,Y} = g \prs{JX,JY} \qquad & g_{\mathbf{ij}} = J_{\mathbf{i}}^a J_{\mathbf{j}}^bg_{ab}.
\end{split}
\end{align}
From $g$ and $J$ arise a natural symplectic form $\gw$ given by
\begin{align}
\begin{split}\label{eq:gcompwdefn}
\gw \prs{X,Y} \triangleq g\prs{JX,Y } \qquad &\gw_{\mathbf{ij}} = J_\mathbf{i}^a g_{\mathbf{j}a}.
\end{split}
\end{align}%

In coordinates, resultant identities are as follows:\footnote{To clarify, we denote by $\omega^{i j}$ those entries of the inverse $\omega^{-1}$ instead of the raising of $\omega$ by the metric $g$.}
\begin{align*}
g_{\mathbf{ij}} = J^s_{\mathbf{i}} \gw_{\mathbf{j}s}& \qquad g^{\mathbf{ij}} = J_s^{\mathbf{i}} \gw^{\mathbf{j}s} \qquad \gw^{\mathbf{ij}} = J^{\mathbf{i}}_s g^{\mathbf{j}s}, \qquad J_{\mathbf{i}}^{\mathbf{j}} = \gw^{\mathbf{j}s} g_{\mathbf{i}s} = \gw_{\mathbf{i}s} g^{\mathbf{j}s}, \qquad \gw^{\mathbf{a}c} \gw_{c \mathbf{b}} = \delta^{\mathbf{a}}_{\mathbf{b}}.
\end{align*}
We have the following decomposition of $TM_{\otimes 2}$,
\begin{align*}
A_{\mathbf{ij}}^{1,1} &= \tfrac{1}{2}\prs{A_{\mathbf{ij}} + A_{ab} J_{\mathbf{i}}^a J_j^b }, \qquad A_{\mathbf{ij}}^{2,0+0,2} = \tfrac{1}{2}\prs{A_{\mathbf{ij}} - A_{ab} J_{\mathbf{i}}^a J_{\mathbf{j}}^b }.
\end{align*}
Likewise for $TM_{\otimes 3}$, we have subsets of $TM_{\otimes 3}$ given by $TM_{\otimes 3}^{2,1+1,2}$ and $TM_{\otimes 3}^{3,0+0,3}$ with projections
\begin{align}
\begin{split}\label{eq:3formtypedecomp}
B^{2,1+1,2}_{\mathbf{ijk}} &\equiv\tfrac{3}{4}B_{\mathbf{ijk}} + \tfrac{1}{4} \prs{J_j^b J_k^c B_{ibc} + J_{\mathbf{i}}^a J_{\mathbf{k}}^c B_{a\mathbf{j}c} + J_{\mathbf{i}}^a J_{\mathbf{j}}^b B_{ab\mathbf{k}}}, \\
B^{3,0+0,3}_{\mathbf{ijk}} &\equiv  \tfrac{1}{4} \prs{B_{\mathbf{ijk}} - J_{\mathbf{i}}^a J_{\mathbf{j}}^b B_{ab\mathbf{k}} - J_{\mathbf{j}}^b J_{\mathbf{k}}^c B_{\mathbf{i}bc} -  J_{\mathbf{i}}^a J_{\mathbf{k}}^c B_{a \mathbf{j} c} }.
\end{split}
\end{align}
\begin{rmk}
Note as stated in (\cite{Gauduchon}, pp.263) , in $\dim M = 4$ we have $\Lambda_3^{3,0+0,3}$ is trivial.
\end{rmk}
Henceforth, we adhere to conventions of Gauduchon and Kobayashi--Nomizu (\cite{KN} Chapter IX, \cite{Gauduchon} pp.259 above (1.1.3)) regarding the corresponding relational identities. We define the $d^c$ operator acting on $\eta \in \Lambda_m(M)$ as
\begin{align*}
\prs{d^c \eta}_{\mathbf{a}_1 \cdots \mathbf{a}_{(n+1)}} &\triangleq - J_{\mathbf{a}_1}^{b_1} \cdots J_{\mathbf{a}_{(n+1)}}^{b_{(n+1)}} \prs{d \eta}_{\mathbf{b}_1 \cdots \mathbf{b}_{(n+1)}}.
\end{align*}
 The Nijenhuis tensor\footnote{Conventions on the Nijenhuis tensor $N$ match \cite{KN}, which differs with \cite{Gauduchon} by a factor of $8$.} is given by the following,
\begin{equation}\label{eq:Nijen}
N_{\mathbf{jk}}^{\mathbf{i}} \triangleq 2 \prs{ J_{\mathbf{j}}^p \prs{\del_p J_{\mathbf{k}}^{\mathbf{i}}} - J_{\mathbf{k}}^p \prs{\del_p J_{\mathbf{j}}^{\mathbf{i}}} - J_p^{\mathbf{i}} \prs{\del_{\mathbf{j}} J_{\mathbf{k}}^p} + J_p^{\mathbf{i}} \prs{\del_{\mathbf{k}} J_{\mathbf{j}}^p}}.
\end{equation}
The Nijenjuis tensor indicates the nonintegrability of $J$ and thus can be seen as one measure of `non-K\"{a}hlerness' of a manifold. Set $N_{\mathbf{ijk}} \triangleq N_{\mathbf{ij}}^l g_{l\mathbf{k}}$ and note $N$ (once its index is lowered) is $J$-antiinvariant in all pairs.
\[
N_{\mathbf{abc}} = - J_{\mathbf{a}}^u J_{\mathbf{b}}^v N_{uv\mathbf{c}} = - J_{\mathbf{b}}^v J_{\mathbf{c}}^r N_{\mathbf{a}vr}.
\]
The Nijenhuis tensor nearly satisfies a Bianchi identity up to a correction ((2.2.4) of \cite{Gauduchon}),
\begin{equation}\label{eq:Ncyclic}
N_{\mathbf{ijk}} + N_{\mathbf{jki}} + N_{\mathbf{kij}} = 8 \prs{d^c \gw}^{3,0+0,3}_{\mathbf{ijk}}.
\end{equation}
\begin{defn} The Chern connection is the unique connection $\N = \del + \gY$ such that
\begin{align}\label{eq:Cherncharac}
\begin{split}
\N g \equiv 0, \qquad \N J &\equiv 0,\qquad \tau^{1,1} \equiv 0,
\end{split}
\end{align}
where $\tau \in \Lambda_2 \otimes TM$ denotes the torsion of $\N$.
\end{defn}
\begin{rmk}
The Levi-Civita connection will be denoted by $D = \del + \gG$.
\end{rmk}
We begin by analysing key quantities of almost Hermitian manifolds with respect to the conventions of \cite{Gauduchon}. Set $\psi \equiv d \gw$.
\begin{defn} The \emph{Lee form} $\vartheta$ is given by
\begin{align*}
\vartheta_{\mathbf{k}} &
\triangleq \tfrac{1}{2} \gw^{ji} \psi_{ij\mathbf{k}}.
\end{align*}
\end{defn}
\subsection{Computational relations}
Given the objects established in the prior section, we now formulate key computational quantities and related identities which will be essential.
\begin{lemma}\label{lem:ThetaNPsi}For $(M^n, \gw, J)$ almost Hermitian, the negative contorsion tensor of the Chern connection is
\begin{align*}
\Theta_{ijk} &\triangleq \prs{D- \N}_{ijk} = \tfrac{1}{8}N_{jki} +  \tfrac{1}{2}  J_i^p \Psi_{pjk},
\end{align*}
where
\begin{equation}\label{eq:Psidef}
\Psi_{ijk} \triangleq \tfrac{1}{2} \prs{\psi_{ijk} + J_j^q J_k^r\psi_{iqr} + J_i^q J_k^r \psi_{qjr} +J_i^p J_j^q \psi_{pqk} } \in \Lambda_3.
\end{equation}
\begin{proof}
Starting from formula (2.5.3) of \cite{Gauduchon} which presents a family of Hermitian connections parametrized by the variable $t$, taking $t = 1$ for the Chern connection we obtain
\begin{align}
\begin{split}\label{eq:gYgG}
\prs{ \N - D}_{ijk} &= - \tfrac{1}{8} N_{jki} + \tfrac{1}{2} \prs{d^c \gw}^{3,0+0,3}_{ijk} + \tfrac{1}{2} \prs{d^c \gw}^{2,1+1,2}_{ibc}J_j^b J_k^c.
\end{split}
\end{align}
Thus rearranging yields
\begin{align*}
\Theta_{ijk} &= \tfrac{1}{8}N_{jki} -  \tfrac{1}{2} \prs{\prs{d^c \gw}^{3,0+0,3}_{ijk} + \prs{d^c \gw}^{2,1+1,2}_{ibc}J_j^b J_k^c}.
\end{align*}
We expand out the latter terms on the right hand side. First,
\begin{align}
\begin{split}\label{eq:dc1221}
\prs{d^c \gw}^{2,1+1,2}_{ijk} &= \tfrac{3}{4}\prs{d^c \gw}_{ijk} + \tfrac{1}{4} \prs{J_j^b J_k^c \prs{d^c \gw}_{ibc} + J_i^a J_k^c \prs{d^c \gw}_{ajc} + J_i^a J_j^b \prs{d^c \gw}_{abk}}\\
&= -\tfrac{3}{4}J_i^p J_j^q J_k^r\psi_{pqr} - \tfrac{1}{4} \prs{  J_i^p  \psi_{pjk} +  J_j^q  \psi_{iqk} +   J_k^r \psi_{ijr}}.
\end{split}
\end{align}
Next we compute
\begin{align}
\begin{split}\label{eq:dcw3003}
\prs{d^c \gw}^{3,0+0,3}_{ijk} &= \tfrac{1}{4} \prs{\prs{d^c \gw}_{ijk} - J_i^a J_j^b \prs{d^c \gw}_{abk} - J_j^b J_k^c \prs{d^c \gw}_{ibc} -  J_i^a J_k^c \prs{d^c \gw}_{ajc} } \\
&= \tfrac{1}{4} \prs{-J_i^p J_j^q J_k^r\psi_{pqr} +   J_k^r \psi_{ijr} +   J_i^p \psi_{pjk} +  J_j^q \psi_{iqk} }.
\end{split}
\end{align}
Modifying \eqref{eq:dc1221} with action by the almost complex structure yields:
\begin{align}
\begin{split}\label{eq:JJdcw1221}
J_j^m J_k^n\prs{d^c \gw}^{2,1+1,2}_{imn}
&= -\tfrac{3}{4}J_j^m J_k^n J_i^p J_m^q J_n^r\psi_{pqr} - \tfrac{1}{4}J_j^m J_k^n \prs{  J_i^p  \psi_{pmn} +  J_m^q  \psi_{iqn} +   J_n^r \psi_{imr}} \\
&= -\tfrac{3}{4} J_i^p \psi_{pjk} - \tfrac{1}{4} \prs{ J_j^m J_k^n J_i^p  \psi_{pmn} -  J_k^n  \psi_{ijn} - J_j^m \psi_{imk}}.
\end{split}
\end{align}
Combining \eqref{eq:JJdcw1221} and \eqref{eq:dcw3003} yields
\begin{align*}
J_j^m J_k^n\prs{d^c \gw}^{2,1+1,2}_{imn}  + \prs{d^c \gw}^{3,0+0,3}_{ijk} &=- \tfrac{1}{4}  J_i^p J_j^q J_k^r\psi_{pqr} + \tfrac{1}{4}  J_k^r \psi_{ijr} + \tfrac{1}{4} J_i^p \psi_{pjk} + \tfrac{1}{4}   J_j^q \psi_{iqk}\\
&\hsp -\tfrac{3}{4} J_i^p \psi_{pjk}  - \tfrac{1}{4} J_j^m J_k^n J_i^p  \psi_{pmn} + \tfrac{1}{4}J_k^n  \psi_{ijn} + \tfrac{1}{4}J_j^m \psi_{imk} \\
&= - \tfrac{1}{2} J_i^p \prs{ J_j^q J_k^r\psi_{pqr} + J_p^y J_k^r \psi_{yjr} +J_p^y J_j^q \psi_{yqk} + \psi_{pjk}}.
\end{align*}
Inserting this into \eqref{eq:gYgG} yields the result.
\end{proof}
\end{lemma}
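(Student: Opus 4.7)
The plan is to start from Gauduchon's formula (2.5.3) of \cite{Gauduchon}, which parametrizes canonical Hermitian connections by a real variable $t$. Setting $t = 1$ selects the Chern connection and yields the identity
\begin{align*}
\prs{\N - D}_{ijk} = -\tfrac{1}{8}N_{jki} + \tfrac{1}{2}\prs{d^c\gw}^{3,0+0,3}_{ijk} + \tfrac{1}{2}\prs{d^c\gw}^{2,1+1,2}_{ibc}J_j^b J_k^c.
\end{align*}
Negating gives $\Theta_{ijk} = \tfrac{1}{8}N_{jki}$ plus two pieces involving $d^c\gw$, so the Nijenhuis contribution already appears with the correct coefficient. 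The proof therefore reduces to the purely algebraic identity
\begin{align*}
\prs{d^c\gw}^{3,0+0,3}_{ijk} + \prs{d^c\gw}^{2,1+1,2}_{ibc}J_j^b J_k^c = -J_i^p \Psi_{pjk}.
\end{align*}

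To establish this, I first use the definition of $d^c$ to write $\prs{d^c\gw}_{abc} = -J_a^p J_b^q J_c^r \psi_{pqr}$ in terms of $\psi = d\gw$. Then I apply the type projection formulas \eqref{eq:3formtypedecomp} separately to $\prs{d^c\gw}^{3,0+0,3}_{ijk}$ and $\prs{d^c\gw}^{2,1+1,2}_{imn}$, each of which expands as a signed combination of four terms. Contracting the 2,1+1,2 projection with $J_j^b J_k^c$ introduces pairs of $J$'s that collapse via $J_a^p J_p^b = -\gd_a^b$, leaving a clean four-term expression to be added to the 3,0+0,3 expansion.

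Summing the two fully expanded tensors, the triple-$J$-contracted terms, which carry coefficients of opposite sign and compatible $J$-structure, combine into a single term, while the remaining single-$J$-contracted contributions consolidate into three terms. After factoring out an overall $J_i^p$, the four-term bracket that remains is exactly the combination appearing in the definition of $\Psi_{pjk}$ in \eqref{eq:Psidef}, yielding the claim. The main obstacle is purely symbolic: tracking index placement, sign conventions, and $J$-contractions through several layers of relabeling so that each term lands in the correct slot of the $\Psi$ expansion. There is no conceptual difficulty once Gauduchon's formula is invoked — the content of the lemma is precisely that the expressions in that formula reorganize into the compact form involving $\Psi$.
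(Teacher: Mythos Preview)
Your proposal is correct and follows essentially the same route as the paper: both start from Gauduchon's formula (2.5.3) at $t=1$, reduce the claim to the algebraic identity $(d^c\gw)^{3,0+0,3}_{ijk} + J_j^b J_k^c (d^c\gw)^{2,1+1,2}_{ibc} = -J_i^p \Psi_{pjk}$, and verify it by expanding each type projection via \eqref{eq:3formtypedecomp} and the definition of $d^c$, then collapsing $J$-pairs. The paper carries out the explicit index computations you describe, arriving at precisely the four-term bracket defining $\Psi_{pjk}$.
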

\begin{cor}\label{cor:Ncycleid}
Updating \eqref{eq:Ncyclic} with \eqref{eq:dcw3003} yields that
\begin{align*}
N_{ijk} + N_{jki} + N_{kij} =  2 \prs{J_k^r \psi_{ijr} +   J_i^p \psi_{pjk} +   J_j^q \psi_{iqk} -J_i^p J_j^q J_k^r\psi_{pqr}}.
\end{align*}
\end{cor}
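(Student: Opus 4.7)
The plan is to observe that this corollary is essentially a direct substitution: equation \eqref{eq:Ncyclic} expresses the cyclic sum $N_{ijk}+N_{jki}+N_{kij}$ in terms of $8\prs{d^c\gw}^{3,0+0,3}_{ijk}$, and equation \eqref{eq:dcw3003} already provides a formula for $\prs{d^c\gw}^{3,0+0,3}_{ijk}$ purely in terms of $J$ and $\psi = d\gw$. So I would simply substitute the latter into the former.

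More concretely, starting from \eqref{eq:Ncyclic}, I write
\[
N_{ijk} + N_{jki} + N_{kij} = 8\prs{d^c\gw}^{3,0+0,3}_{ijk},
\]
and then replace the right-hand side using \eqref{eq:dcw3003}, namely
\[
\prs{d^c\gw}^{3,0+0,3}_{ijk} = \tfrac{1}{4}\prs{-J_i^p J_j^q J_k^r\psi_{pqr} + J_k^r \psi_{ijr} + J_i^p \psi_{pjk} + J_j^q \psi_{iqk}}.
\]
Multiplying through by $8$ gives a factor of $2$ on each term, yielding exactly the claimed identity.

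There is essentially no obstacle here: the factor of $8$ in \eqref{eq:Ncyclic} and the factor of $\tfrac{1}{4}$ that was produced during the projection onto the $(3,0)+(0,3)$ component in \eqref{eq:dcw3003} combine cleanly into the overall prefactor of $2$. The only small point to verify is that the sign and index pattern of \eqref{eq:dcw3003} match the expression stated in the corollary, which is immediate by inspection. Hence the corollary follows by one line of substitution with no further computation required.
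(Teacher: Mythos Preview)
Your proposal is correct and matches the paper's approach exactly: the corollary is stated in the paper without a separate proof, since its very wording (``Updating \eqref{eq:Ncyclic} with \eqref{eq:dcw3003} yields that\ldots'') already indicates the one-line substitution you describe. The multiplication $8\cdot\tfrac{1}{4}=2$ and the reordering of terms are precisely as you note.
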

With the tensors $\Theta$ and $\Psi$ classified above, we record some basic properties of each which will be crucial to upcoming computations. Note that $\Psi$ was identified as a natural generalisation of $\psi$; the below properties for $\Psi$ hold for $\psi$, and for $\dim M = 4$, we have $\Psi \equiv \psi$.
\begin{lemma}\label{lem:ThetaPsi} For $(M^n,\gw,J)$ almost Hermitian,
\begin{enumerate}
\item $\Theta_{ijk}$ is skew in $(jk)$.
\item $\gw^{ij} \Psi_{ijk} = -2 \vartheta_k$ and $g^{ij} \Psi_{ijk} = 0$.
\item $\gw^{ij} \Theta_{ijk} = 0$, $\gw^{ik} \Theta_{ijk} = 0$, and $\gw^{jk} \Theta_{ijk} = -J_i^p \vartheta_p$.
\item $g^{ij} \Theta_{ijk} = - \vartheta_k$, $g^{ik} \Theta_{ijk} = \vartheta_j$, and $g^{jk} \Theta_{ijk} = 0$.
\end{enumerate}
\begin{proof} Identity (1) is apparent by the definition of $\Theta$. For (2), we compute the corresponding contractions of $\Psi$, beginning with contractions by $\gw$,
\begin{align*}
\gw^{ij} \Psi_{ijk} &= \tfrac{1}{2} \prs{ J_j^q \gw^{ij} J_k^r\psi_{iqr} + J_i^p\gw^{ij} J_k^r \psi_{pjr} +J_i^p\gw^{ij} J_j^q \psi_{pqk} + \gw^{ij}\psi_{ijk}} \\
&= \tfrac{1}{2} \prs{ g^{qi} J_k^r\psi_{iqr} - g^{pj} J_k^r \psi_{pjr} + \gw^{pq} \psi_{pqk} + \gw^{ij}\psi_{ijk}} \\
&= -2 \vartheta_k.
\end{align*}
The $g$ contraction follows easily from the expression of $\Psi$ in \eqref{eq:Psidef}.
For (3) we investigate $\gw$ contractions of $\Theta$. First,
\begin{align*}
\gw^{ij}\Theta_{ijk} &= 0 +  \tfrac{1}{2}  J_i^p \gw^{ij} \Psi_{pjk} \\
&= 0.
\end{align*}
The next identity, $\gw^{ik} \Theta_{ijk} = 0$ follows by (1). Lastly we have
\begin{align*}
\gw^{jk}\Theta_{ijk} &= 0 + \tfrac{1}{2}  J_i^p \gw^{jk} \Psi_{pjk} \\
&= - J_i^p \vartheta_p.
\end{align*}
Finally, we have that for (4),
\begin{align*}
g^{ij}\Theta_{ijk} &= 0 +  \tfrac{1}{2} J_i^p g^{ij} \Psi_{pjk} \\
&= - \vartheta_k.
\end{align*}
The last two identities follow by (1) again. We conclude the result.
\end{proof}
\end{lemma}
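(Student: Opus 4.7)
The plan is to exploit the formula $\Theta_{ijk} = \tfrac{1}{8}N_{jki} + \tfrac{1}{2}J_i^p\,\Psi_{pjk}$ supplied by Lemma \ref{lem:ThetaNPsi}, splitting every contraction of $\Theta$ cleanly into an $N$-piece and a $\Psi$-piece. Part (1) then follows either from metric compatibility of both $D$ and $\N$ with $g$ (which forces the contorsion $(D-\N)_{ijk}$ to be skew in $(j,k)$) or directly from the formula, since $N_{jki}$ is skew in $(j,k)$ by \eqref{eq:Nijen} and $\Psi_{pjk}\in\Lambda_3$ is skew in $(j,k)$.

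For (2), I would expand $\Psi_{ijk}$ according to \eqref{eq:Psidef} as a sum of four terms in $\psi$ and contract with $\omega^{ij}$ and with $g^{ij}$ term by term. Using the standard identities among $g$, $\omega$, and $J$ from the preliminaries to absorb the inner $J$-factors into the contraction, each term collapses either to a $g$-trace of $\psi$ on two of its skew indices (which vanishes, symmetric against totally skew) or to an $\omega$-trace of $\psi$ on two of its indices (which equals $\pm 2\vartheta$ at the free index by the definition of the Lee form). Tallying the four contributions with their signs produces $\omega^{ij}\Psi_{ijk}=-2\vartheta_k$ and $g^{ij}\Psi_{ijk}=0$.

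For (3) and (4), I would substitute the formula for $\Theta$ into each of the six stated contractions and treat the $N$- and $\Psi$-pieces separately. The key observation about the Nijenhuis tensor is that it is trace-free against both $g$ and $\omega$ on every pair of its three lowered indices: contracting $N_{abc}=-J_a^u J_b^v N_{uvc}$ with $g^{ab}$ and using the $g$-compatibility identity $g^{ab}J_a^u J_b^v=g^{uv}$ gives a self-cancellation $g^{ab}N_{abc}=-g^{ab}N_{abc}$; replacing $g$ by $\omega$ and using the $J$-invariance $\omega^{ab}J_a^u J_b^v=\omega^{uv}$ handles the $\omega$-contraction; the second $J$-antiinvariance $N_{abc}=-J_b^v J_c^r N_{avr}$ kills the $(b,c)$-traces; and the remaining $(a,c)$-traces reduce to one of the above via the first-two-index skew-symmetry of $N$. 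Consequently, the $N$-piece drops out of every contraction in (3)--(4), and everything reduces to a $\tfrac{1}{2}J_i^p$ times an $\omega$- or $g$-trace of $\Psi_{pjk}$. Using the total antisymmetry of $\Psi$ to cyclically rotate the contracted slots into the first two positions, each such trace collapses via (2) to either $0$, $\vartheta$ at the free index, or (in the $\omega^{jk}$ case) $-2\vartheta_p$, which after multiplication by $\tfrac{1}{2}J_i^p$ yields $-J_i^p\vartheta_p$.

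The substantive content is thin; the main obstacle is notational bookkeeping. Keeping signs and index orderings straight across repeated applications of the $g/\omega/J$ conversions, and across cyclic rotations of the three-form $\Psi$, is the error-prone step.
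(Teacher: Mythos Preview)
Your proposal is correct and follows essentially the same route as the paper: expand $\Psi$ termwise for (2), then for (3)--(4) use $\Theta_{ijk}=\tfrac{1}{8}N_{jki}+\tfrac{1}{2}J_i^p\Psi_{pjk}$, drop the $N$-contribution (the paper simply writes ``$0+$'' at this step, whereas you spell out the $J$-antiinvariance argument), and reduce the $\Psi$-contribution to (2) via the $g/\omega/J$ identities. The only cosmetic difference is that the paper absorbs $J_i^p$ directly into $\omega^{ij}$ or $g^{ij}$ (e.g.\ $J_i^p\omega^{ij}=-g^{pj}$, $J_i^p g^{ij}=\omega^{pj}$) rather than phrasing it as cyclic rotation of $\Psi$, but the computations are identical.
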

\subsection{Chern curvature identities}
Here we record some main identities concerning the Chern connection's associated curvature $\Omega$, torsion $\tau$ and contorsion tensor $-\Theta$.
\begin{lemma}\label{lem:Cherncurvsym}
Let $\prs{M^{2n},g,J}$ be almost Hermitian. Then
\begin{align*}
\Omega_{\mathbf{ijkl}} = - \Omega_{\mathbf{jikl}} = - \Omega_{\mathbf{ijlk}} = \Omega_{\mathbf{ij}ab} J^a_{\mathbf{k}} J^b_{\mathbf{l}} .
\end{align*}
\end{lemma}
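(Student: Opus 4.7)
The plan is to derive each of the three asserted equalities from a distinct one of the defining properties of the Chern connection in \eqref{eq:Cherncharac}. The torsion condition $\tau^{1,1}\equiv 0$ plays no role, so the symmetries listed are in fact those satisfied by the curvature of any linear connection compatible with both $g$ and $J$.

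The first equality $\Omega_{\mathbf{ijkl}} = -\Omega_{\mathbf{jikl}}$ is tautological: for any connection the curvature operator $\Omega(X,Y) = [\N_X, \N_Y] - \N_{[X,Y]}$ is manifestly antisymmetric in $(X,Y)$. For the second, I would use $\N g\equiv 0$, which forces $R(X,Y)$ to annihilate $g$ viewed as a $(0,2)$-tensor:
$$0 = (R(X,Y)g)(Z,W) = -g(R(X,Y)Z, W) - g(Z, R(X,Y)W),$$
which in indices reads $\Omega_{\mathbf{ijkl}} + \Omega_{\mathbf{ijlk}} = 0$.

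The third identity requires a brief index manipulation. From $\N J\equiv 0$ one obtains the operator identity $R(X,Y)\circ J = J\circ R(X,Y)$, that is,
$$\Omega_{\mathbf{ij}a}{}^b J_{\mathbf{k}}^a = J_a^b\,\Omega_{\mathbf{ijk}}{}^a.$$
Lowering $b$ with $g$, using $J_a^b g_{bp} = \gw_{ap}$ from the coordinate list after \eqref{eq:gcompwdefn}, together with its immediate consequence $\gw_{ap}g^{an} = -J_p^n$, yields
$$\Omega_{\mathbf{ij}ap}\, J_{\mathbf{k}}^a \;=\; -J_p^n\,\Omega_{\mathbf{ijk}n}.$$
Contracting with $J_{\mathbf{l}}^p$ and applying $J^2 = -\Id$ then produces $\Omega_{\mathbf{ij}ab} J_{\mathbf{k}}^a J_{\mathbf{l}}^b = \Omega_{\mathbf{ijkl}}$, as required. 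No step presents any genuine obstacle; the only care needed anywhere in the argument is this last round of index bookkeeping.
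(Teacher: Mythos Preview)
Your proof is correct. The paper states this lemma without proof, treating it as a standard fact about the curvature of a connection satisfying $\N g \equiv 0$ and $\N J \equiv 0$; your argument supplies exactly the routine verification one would expect, and the index manipulation for the $J$-invariance identity is carried out cleanly.
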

\noindent Aspects of the Riemann curvature tensor translate with some residual torsion terms. Define \emph{Chern--Ricci curvature} ($\cP$) and a computational intermediary ($\cV$) by
\begin{align}
\begin{split}\label{eq:curvatures}
\cP_{\mathbf{ab}} &\triangleq  \gw^{cd}  \Omega_{\mathbf{ab}cd} \qquad
\cV_{\mathbf{ab}}\triangleq  \Omega_{r \mathbf{ab}}^r,
\end{split}
\end{align}
and finally, the \emph{Chern scalar curvature} is
\begin{equation*}
\varrho \triangleq  \gw^{ba} \cP_{ab}.
\end{equation*}
\begin{prop}\label{prop:RmOmega} For $(M^n, \gw, J)$ almost Hermitian,
\begin{equation}\label{eq:RmvsOmega}
\Rm_{ijkl} = \Omega_{ijkl} + \N_i \Theta_{jkl} - \N_j \Theta_{ikl} + \Theta_{isl} \Theta_{jk}^s - \Theta_{jsl} \Theta_{ik}^s + \tau_{ijs} \Theta_{skl}.
\end{equation}
\begin{proof} This follows by expanding out the formulas for $\Rm$ and $\Omega$ in terms of connection coefficients.
\end{proof}
\end{prop}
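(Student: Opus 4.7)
The plan is a direct coordinate computation starting from the definitions $D = \partial + \Gamma$ and $\N = \partial + \gY$, and using the fact that $\Theta = D - \N$ gives $\Gamma = \gY + \Theta$. Writing the standard coordinate formula for the Riemann tensor of any linear connection, one has
\begin{align*}
\Rm^{l}{}_{kij} &= \del_i \Gamma^{l}_{jk} - \del_j \Gamma^{l}_{ik} + \Gamma^{l}_{is} \Gamma^{s}_{jk} - \Gamma^{l}_{js} \Gamma^{s}_{ik},\\
\Omega^{l}{}_{kij} &= \del_i \gY^{l}_{jk} - \del_j \gY^{l}_{ik} + \gY^{l}_{is} \gY^{s}_{jk} - \gY^{l}_{js} \gY^{s}_{ik}.
\end{align*}
The first step is to substitute $\Gamma = \gY + \Theta$ into the formula for $\Rm$ and expand. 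The pure $\gY$ contribution reproduces $\Omega$, and the difference $\Rm - \Omega$ decomposes into terms linear in $\Theta$ (coming from $\del \Theta$ and $\gY\cdot\Theta$) and terms quadratic in $\Theta$.

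Next, the linear terms should be reassembled into covariant derivatives. Writing out the Chern-covariant derivative on a $(0,3)$-tensor,
\[
\N_i \Theta^{l}_{jk} = \del_i \Theta^{l}_{jk} + \gY^{l}_{is}\Theta^{s}_{jk} - \gY^{s}_{ij}\Theta^{l}_{sk} - \gY^{s}_{ik}\Theta^{l}_{js},
\]
one sees that $\del_i \Theta^{l}_{jk} + \gY^{l}_{is}\Theta^{s}_{jk}$ matches the first two terms of $\N_i \Theta^{l}_{jk}$, so solving for it introduces $\gY^{s}_{ij}\Theta^{l}_{sk} + \gY^{s}_{ik}\Theta^{l}_{js}$ as correction. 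Subtracting the analogous $i\leftrightarrow j$ expression produces $\N_i \Theta^{l}_{jk} - \N_j \Theta^{l}_{ik}$ together with $(\gY^{s}_{ij} - \gY^{s}_{ji})\Theta^{l}_{sk}$, which is exactly the torsion contribution $\tau^{s}_{ij}\Theta^{l}_{sk}$, plus the cross terms $\gY^{s}_{ik}\Theta^{l}_{js} - \gY^{s}_{jk}\Theta^{l}_{is}$. The key observation is that these last cross terms exactly cancel against the remaining mixed expansion pieces $\Theta^{l}_{is}\gY^{s}_{jk} - \Theta^{l}_{js}\gY^{s}_{ik}$ that appear from expanding $\Gamma^{l}_{is}\Gamma^{s}_{jk}$. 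What survives from the quadratic expansion is $\Theta^{l}_{is}\Theta^{s}_{jk} - \Theta^{l}_{js}\Theta^{s}_{ik}$.

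Finally, one lowers the free index $l$ using $\N g = 0$, which commutes the lowering past the covariant derivatives and turns $\Theta^{l}_{is}\Theta^{s}_{jk}$ into $\Theta_{isl}\Theta_{jk}^{s}$, producing the claimed formula
\[
\Rm_{ijkl} = \Omega_{ijkl} + \N_i \Theta_{jkl} - \N_j \Theta_{ikl} + \Theta_{isl}\Theta_{jk}^{s} - \Theta_{jsl}\Theta_{ik}^{s} + \tau_{ijs}\Theta^{s}{}_{kl}.
\]

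The computation itself is entirely routine; the only place where one can easily make a mistake is the index bookkeeping around the torsion term, since the Chern connection's asymmetric Christoffels make it slightly non-obvious which cross terms recombine to $\tau$ and which cancel. The cleanest way to avoid errors is to pair terms systematically: first identify the $\N\Theta$ pieces using the explicit formula for $\N_i\Theta^{l}_{jk}$, then verify cancellation of the two pairs of mixed $\gY\cdot\Theta$ terms, and finally recognise the residual antisymmetrised $\gY$ combination as $\tau$.
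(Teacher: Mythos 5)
Your proposal is correct and is exactly the computation the paper's one-line proof alludes to (expanding $\Rm$ and $\Omega$ in connection coefficients via $\Gamma = \gY + \Theta$, reassembling the linear terms into $\N\Theta$, and identifying $\gY^s_{ij}-\gY^s_{ji}$ with $\tau^s_{ij}$); the cancellation of the mixed $\gY\cdot\Theta$ cross terms and the sign bookkeeping all check out against the paper's conventions $\Theta = D - \N$ and $\tau_{ijk} = \Theta_{jik}-\Theta_{ijk}$. No gaps.
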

\begin{cor}\label{cor:RcncV}
For $(M^n, \gw, J)$ almost Hermitian,
\begin{align*}
\Rc_{jk}
&= \cV_{jk} + g^{il}\N_i \Theta_{jkl} - \N_j \vartheta_k + \vartheta_s\Theta_{jk}^s - \Theta_{ij}^s\Theta_{sk}^i.
\end{align*}
\begin{proof}
Take the trace of Proposition \ref{prop:RmOmega} and simplify using Lemma \ref{lem:ThetaPsi}.
\end{proof}
\end{cor}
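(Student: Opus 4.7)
The plan is to contract Proposition \ref{prop:RmOmega} with $g^{il}$, producing $\Rc_{jk}$ on the left-hand side, and then match each of the six resulting terms on the right to the four-term target expression.

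The first term $g^{il}\Omega_{ijkl}$ is precisely $\cV_{jk}$ by the definition in \eqref{eq:curvatures}, after noting $\cV_{jk}=g^{rs}\Omega_{rjks}$ and relabeling dummies. The covariant derivative term $g^{il}\N_i\Theta_{jkl}$ requires no simplification. For $-g^{il}\N_j\Theta_{ikl}$, I would pull $g^{il}$ through $\N_j$ (permissible since $\N g=0$) and apply Lemma \ref{lem:ThetaPsi}(4), whose identity $g^{ik}\Theta_{ijk}=\vartheta_j$ contracts the first and third slots of $\Theta$; reapplied with appropriate relabeling, this gives $g^{il}\Theta_{ikl}=\vartheta_k$, producing $-\N_j\vartheta_k$. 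The quadratic term $g^{il}\Theta_{isl}\Theta_{jk}^s$ is handled identically, since $g^{il}\Theta_{isl}=\vartheta_s$ again by Lemma \ref{lem:ThetaPsi}(4), yielding $\vartheta_s\Theta_{jk}^s$.

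What remains is to show the two unmatched pieces, $-g^{il}\Theta_{jsl}\Theta_{ik}^s$ and $g^{il}\tau_{ijs}\Theta_{skl}$, combine into the single term $-\Theta_{ij}^s\Theta_{sk}^i$. My approach would be to expand the torsion using $\tau_{ij}^{\,s}=\Theta_{ji}^{\,s}-\Theta_{ij}^{\,s}$, which follows from $\Theta = D - \N$ and the torsion-freeness of the Levi-Civita connection. This produces three quadratic-$\Theta$ terms; one of them already matches $-\Theta_{ij}^s\Theta_{sk}^i$ after relabeling dummies, and the remaining two should cancel each other by swapping dummy labels (e.g.\ $i\leftrightarrow s$) and using the skew-symmetry of $\Theta$ in its last two slots from Lemma \ref{lem:ThetaPsi}(1).

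The main obstacle will be precisely this last cancellation: it is a bookkeeping exercise rather than a conceptual one, requiring a careful choice of which indices to swap and judicious use of $\Theta_{ijk}=-\Theta_{ikj}$ to bring terms into a common form so that the two surplus contributions appear with opposite signs on the same index pattern. Everything else is essentially by definition or a direct citation of the already established identities.
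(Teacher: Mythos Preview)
Your proposal is correct and is exactly what the paper's one-line proof (``take the trace of Proposition \ref{prop:RmOmega} and simplify using Lemma \ref{lem:ThetaPsi}'') unpacks to: the $\cV$, $\N\Theta$, and $\vartheta$-terms drop out by definition and Lemma \ref{lem:ThetaPsi}(4), the torsion is rewritten via $\tau_{ij}^{\,s}=\Theta_{ji}^{\,s}-\Theta_{ij}^{\,s}$, and the two surplus quadratic-$\Theta$ terms cancel under the dummy swap $(i,l)\leftrightarrow(s,e)$ (the skew-symmetry of $\Theta$ in its last slots is in fact not even needed here).
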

\begin{lemma}\label{lem:omegacyc} For $(M^n, J, \gw)$ almost Hermitian,
\begin{align*}
\Omega_{ijkl} + \Omega_{jkil} + \Omega_{kijl} &= \prs{\N_i \tau_{jkl}+  \N_j \tau_{kil} + \N_k \tau_{ijl}} + \prs{\tau_{jk}^s\tau_{sil} + \tau_{ki}^s \tau_{sjl}+ \tau_{ij}^s \tau_{skl}}.
\end{align*}
\begin{proof} Expand the Bianchi identity for Riemannian curvature using Proposition \ref{prop:RmOmega}.
\end{proof}
\end{lemma}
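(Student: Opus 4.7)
The plan is to start from the first (algebraic) Bianchi identity for the Levi-Civita curvature,
\[
\Rm_{ijkl} + \Rm_{jkil} + \Rm_{kijl} = 0,
\]
and substitute Proposition~\ref{prop:RmOmega} into each of the three summands. The three $\Omega$-pieces immediately produce the left hand side of the claimed identity, and the task is to verify that the remaining cyclic sum reassembles into the right hand side.

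The crucial dictionary is the relation between $\Theta$ and $\tau$: since $\Theta = D-\N$ with $D$ torsion-free, one checks that antisymmetrizing $\Theta$ in its first two indices yields the torsion, i.e.\ $\Theta_{ij}{}^k - \Theta_{ji}{}^k = -\tau_{ij}{}^k$. First I would handle the derivative terms. Cyclically summing the pieces $\N_i \Theta_{jkl} - \N_j \Theta_{ikl}$ and regrouping by the outer $\N$-index produces
\[
\N_i\prs{\Theta_{jkl} - \Theta_{kjl}} + \N_j\prs{\Theta_{kil} - \Theta_{ikl}} + \N_k\prs{\Theta_{ijl} - \Theta_{jil}}.
\]
Each parenthesised difference is precisely the antisymmetrization of $\Theta$ in its first two indices, so the dictionary together with the skew-symmetry of $\tau$ collapses the whole expression to $-\prs{\N_i\tau_{jkl} + \N_j\tau_{kil} + \N_k\tau_{ijl}}$. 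Moving this across the Bianchi identity produces the first group of terms on the right hand side.

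The quadratic pieces $\Theta_{isl}\Theta_{jk}{}^s - \Theta_{jsl}\Theta_{ik}{}^s$ together with the torsion--contorsion cross terms $\tau_{ij}{}^s \Theta_{skl}$ must, by the same mechanism, reassemble under cyclic summation into the pure torsion quadratic $\tau_{jk}{}^s \tau_{sil} + \tau_{ki}{}^s \tau_{sjl} + \tau_{ij}{}^s \tau_{skl}$. The idea is that each $\Theta\Theta$-product can be split by decomposing one factor into its symmetric and antisymmetric parts in the first two indices; the antisymmetric part converts that $\Theta$ to a $\tau$, and the residual terms (those where no factor was antisymmetrized) are exactly what the $\tau\,\Theta$ contributions are designed to cancel.

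The main obstacle is not conceptual but \emph{combinatorial}: because $\Theta_{ijk}$ is antisymmetric only in $(jk)$ by Lemma~\ref{lem:ThetaPsi}(1), and crucially \emph{not} in $(ij)$, extracting a torsion from each $\Theta$ requires a careful sequence of index swaps, and verifying that all stray $\Theta\Theta$-pieces really cancel against the $\tau\,\Theta$ contributions is the substantive calculation. Once this cancellation is confirmed, a straightforward rearrangement gives the stated identity.
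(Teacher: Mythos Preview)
Your approach is correct and is exactly the paper's (one-line) proof. The quadratic step is cleaner than you fear: the six $\Theta\Theta$-terms regroup as $\Theta_{isl}(\Theta_{jk}^s-\Theta_{kj}^s)+\text{cyc}=-\tau_{jk}^s\Theta_{isl}+\text{cyc}$, and adding the explicit $\tau_{ij}^s\Theta_{skl}$ cross terms gives $\tau_{jk}^s(\Theta_{sil}-\Theta_{isl})+\text{cyc}=-\tau_{jk}^s\tau_{sil}+\text{cyc}$, whose negative (after moving across the Bianchi identity) is the claimed quadratic.
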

\begin{lemma}\label{lem:Chernsymm}
For $(M^n, J, \gw)$ almost Hermitian we have that
\begin{align*}
\Omega_{klij} -\Omega_{ijkl} \triangleq T_{klij} &= \N_i \Theta_{jkl} - \N_j \Theta_{ikl} -\N_k \Theta_{lij} + \N_l \Theta_{kij} \\
&\hsp + \Theta_{isl} \Theta_{jk}^s - \Theta_{ksj} \Theta_{li}^s - \Theta_{jsl} \Theta_{ik}^s+ \Theta_{lsj} \Theta_{ki}^s + \tau_{ij}^s \Theta_{skl}  - \tau_{kl}^s \Theta_{sij}.
\end{align*}
Where it follows that
\begin{equation}\label{eq:Tsyms}T_{ijkl} = - T_{klij}, \qquad T_{ijkl} = - T_{jikl} =- T_{ijlk}.
\end{equation}
\begin{proof}
This follows by computing $\Rm_{ijkl} - \Rm_{klij}$ using \eqref{eq:RmvsOmega}.
\end{proof}
\end{lemma}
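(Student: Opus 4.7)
The plan is to invoke Proposition \ref{prop:RmOmega} twice, once for $\Rm_{ijkl}$ and once for the index-permuted version $\Rm_{klij}$, and then exploit the standard Riemannian symmetry $\Rm_{ijkl}=\Rm_{klij}$. Since \eqref{eq:RmvsOmega} expresses $\Rm$ as $\Omega$ plus an explicit tail built from $\nabla \Theta$, $\Theta\star\Theta$, and $\tau\cdot\Theta$, subtracting the two expansions will cancel $\Rm$ and leave exactly the identity to be proved once we isolate $\Omega_{klij}-\Omega_{ijkl}$ on one side.

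Concretely, I would first write
\[
\Rm_{ijkl}=\Omega_{ijkl}+\nabla_i\Theta_{jkl}-\nabla_j\Theta_{ikl}+\Theta_{isl}\Theta_{jk}^{\,s}-\Theta_{jsl}\Theta_{ik}^{\,s}+\tau_{ij}^{\,s}\Theta_{skl},
\]
then apply the same formula after the swap $(i,j,k,l)\mapsto(k,l,i,j)$ to obtain
\[
\Rm_{klij}=\Omega_{klij}+\nabla_k\Theta_{lij}-\nabla_l\Theta_{kij}+\Theta_{ksj}\Theta_{li}^{\,s}-\Theta_{lsj}\Theta_{ki}^{\,s}+\tau_{kl}^{\,s}\Theta_{sij}.
\]
Setting the difference equal to zero and rearranging so that $\Omega_{klij}-\Omega_{ijkl}$ sits on the left immediately yields the claimed formula for $T_{klij}$. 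No structural input beyond \eqref{eq:RmvsOmega} and the pair symmetry of $\Rm$ is required; this step is pure bookkeeping, so the only place to go wrong is keeping the index positions straight between the two expansions.

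For the symmetries in \eqref{eq:Tsyms}, I would argue directly from the definition $T_{klij}=\Omega_{klij}-\Omega_{ijkl}$. The relation $T_{ijkl}=-T_{klij}$ is immediate by interchanging the two terms. The two skew-symmetries $T_{ijkl}=-T_{jikl}$ and $T_{ijkl}=-T_{ijlk}$ follow at once from the corresponding skew-symmetries of $\Omega$ recorded in Lemma \ref{lem:Cherncurvsym}, namely $\Omega_{ijkl}=-\Omega_{jikl}=-\Omega_{ijlk}$, applied to each of the two $\Omega$-terms in the definition of $T$.

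The main (very mild) obstacle is simply to verify that the right-hand side produced by subtracting the two instances of \eqref{eq:RmvsOmega} matches the listed expression termwise, in particular that the quadratic $\Theta$-pieces appear with the correct index contractions $\Theta_{isl}\Theta_{jk}^{\,s}$, $\Theta_{ksj}\Theta_{li}^{\,s}$, $\Theta_{jsl}\Theta_{ik}^{\,s}$, $\Theta_{lsj}\Theta_{ki}^{\,s}$; this is a routine relabeling exercise. No use of the special structural identities for $\Theta$ from Lemma \ref{lem:ThetaPsi} is needed at this stage, which makes the argument robust and purely formal.
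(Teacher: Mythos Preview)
Your proposal is correct and follows precisely the approach indicated in the paper: subtract the two instances of \eqref{eq:RmvsOmega} obtained by swapping $(i,j,k,l)\leftrightarrow(k,l,i,j)$, use $\Rm_{ijkl}=\Rm_{klij}$, and rearrange. The symmetry claims in \eqref{eq:Tsyms} are likewise handled exactly as you describe, directly from the definition of $T$ and the skew-symmetries of $\Omega$ in Lemma~\ref{lem:Cherncurvsym}.
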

\section{Relation of Ricci tensors} To show that \eqref{eq:AHCF} preserves the almost Hermitian structure, we need to obtain the explicit form of the difference between the Ricci and Chern Ricci tensors.
\begin{lemma}\label{lem:VnJP} For $(M^n, J,\gw)$ almost Hermitian,
\begin{align*}
\cV_{jk}
 &= \tfrac{1}{2}J_k^a \cP_{ja} + J_k^a\gw^{br} \N_b \Theta_{jar}  + \tfrac{1}{2}\N_j   \vartheta_k+ \tfrac{1}{2}J_k^a  J_j^u\N_a \vartheta_u \\
 &\hsp +  J_k^a \gw^{br} \prs{\Theta_{ar}^s - \Theta_{ra}^s}  \Theta_{jbs} +J_k^a\gw^{br} \prs{\Theta_{jb}^s - \Theta_{bj}^s} \Theta_{sar} + \tfrac{1}{2} J_k^a J_s^p\vartheta_p \prs{\Theta_{ja}^s - \Theta_{aj}^s}.
\end{align*}
\begin{proof} We model our argument on Lemma 2.16 of \cite{Kelleher}. Using Lemma \ref{lem:omegacyc} and manipulating (with an important step highlighted),
\begin{align*}
\cV_{jk} &= g^{re} \Omega_{rjke} \\
&= J_k^a J_e^b g^{re} \Omega_{rjab} \\
&= \underbrace{J_k^a \gw^{br} \Omega_{rjab}} \\
&= - J_k^a \gw^{br} \prs{\Omega_{jarb} + \Omega_{arjb} } \\
&\hsp + J_k^a \gw^{br} \prs{\N_j \tau_{arb} + \N_a \tau_{rjb} + \N_r \tau_{jab}} \\
&\hsp - J_k^a \gw^{br} g_{bs} \prs{ \tau_{re}^s \tau_{ja}^e + \tau_{je}^s \tau_{ar}^e + \tau_{ae}^s \tau_{rj}^e}.
\end{align*}

We leave the latter lines alone and manipulate the first,
\begin{align*}
 - J_k^a \gw^{br} \prs{ \Omega_{jarb} + \Omega_{arjb}}&=  J_k^a \cP_{ja} - J_k^a \gw^{br}  \Omega_{arjb}  \\
&=  J_k^a \cP_{ja} - J_k^a \gw^{br}  T_{arjb} - J_k^a \gw^{br}  \Omega_{jbar}  \\
&=  J_k^a \cP_{ja} - J_k^a \gw^{br}  T_{arjb} - \underbrace{J_k^a \gw^{br}  \Omega_{rjab}}.
\end{align*}
Given the reappearance of the indicated term, we rearrange and obtain
\begin{align*}
\cV_{jk} &= J_k^a \gw^{br} \Omega_{rjab}\\
&=  \tfrac{1}{2}J_k^a \cP_{ja} - \tfrac{1}{2} J_k^a \gw^{br}  T_{arjb} \\
&\hsp +\tfrac{1}{2} J_k^a \gw^{br} \prs{\N_j \tau_{arb} + \N_a \tau_{rjb} + \N_r \tau_{jab}} \\
&\hsp - \tfrac{1}{2} J_k^a \gw^{br} g_{bs} \prs{ \tau_{re}^s \tau_{ja}^e + \tau_{je}^s \tau_{ar}^e + \tau_{ae}^s \tau_{rj}^e}.
\end{align*}
Now we convert all $\tau$ terms to $\Theta$, noting that $\tau_{ijk} \equiv \Theta_{jik} - \Theta_{ijk}$,
\begin{align*}
\cV_{jk}
&=  \tfrac{1}{2}J_k^a \cP_{ja} - \tfrac{1}{2} J_k^a \gw^{br}  T_{arjb} &\mbox{(R1)}\\
&\hsp +\tfrac{1}{2} J_k^a \gw^{br} \prs{-\N_j  \Theta_{arb} + \N_a \Theta_{jrb} + \N_r \prs{\Theta_{ajb}- \Theta_{jab}}} &\mbox{(R2)}\\
&\hsp - \tfrac{1}{2} J_k^a \gw^{br}\prs{ \prs{\Theta_{erb}- \Theta_{reb} }\prs{\Theta_{aj}^e -\Theta_{ja}^e} + \prs{\Theta_{ejb} -\Theta_{jeb}} \prs{ \Theta_{ra}^e - \Theta_{ar}^e}} \\
&\hsp \hsp - \tfrac{1}{2} J_k^a \gw^{br}\prs{\prs{\Theta_{eab} - \Theta_{aeb}} \prs{\Theta_{jr}^e -\Theta_{rj}^e}}. &\mbox{(R3)}
\end{align*}
We now simplify each labelled row beginning with Row 1. Using Lemmata \ref{lem:ThetaPsi} and \ref{lem:Chernsymm},
\begin{align*}
\gw^{br}T_{arjb}  &= -\gw^{br}T_{arbj}\\
&=- \gw^{br} \N_b \Theta_{jar} + \gw^{br} \N_j \Theta_{bar} + \gw^{br} \N_a \Theta_{rbj} - \gw^{br} \N_r \Theta_{abj} \\
&\hsp - \gw^{br} \Theta_{bsr} \Theta_{ja}^s + \gw^{br} \Theta_{asj} \Theta_{rb}^s + \gw^{br} \Theta_{jsr} \Theta_{ba}^s- \gw^{br}\Theta_{rsj} \Theta_{ab}^s \\
&\hsp - \gw^{br}\tau_{bj}^s \Theta_{sar}  + \gw^{br}\tau_{ar}^s \Theta_{sbj} \\
&= - \gw^{br} \N_b \Theta_{jar} + \gw^{br} \N_b \Theta_{arj} - \gw^{br} \Theta_{jsb} \Theta_{ra}^s + \gw^{br}\Theta_{bsj}\Theta_{ar}^s\\
&\hsp - \gw^{br} \Theta_{jb}^s \Theta_{sar} + \gw^{br} \Theta_{bj}^s \Theta_{sar} + \gw^{br} \Theta_{sbj} \Theta_{ra}^s - \gw^{br} \Theta_{sbj} \Theta_{ar}^s.
\end{align*}
Therefore it follows that
\begin{align*}
\prs{\text{R1}}_{jk} -  \tfrac{1}{2}J_k^a \cP_{ja}&=- \tfrac{1}{2} J_k^a\gw^{br}T_{arjb} \\
&= \tfrac{1}{2} J_k^a\gw^{br} \N_b \Theta_{jar} - \tfrac{1}{2} J_k^a\gw^{br} \N_b \Theta_{arj}
 + \tfrac{1}{2} J_k^a\gw^{br} \Theta_{jsb} \Theta_{ra}^s - \tfrac{1}{2} J_k^a \gw^{br}\Theta_{bsj}\Theta_{ar}^s\\
 &\hsp +\tfrac{1}{2} J_k^a\gw^{br} \Theta_{jb}^s \Theta_{sar} - \tfrac{1}{2} J_k^a \gw^{br} \Theta_{bj}^s \Theta_{sar}
 - \tfrac{1}{2} J_k^a \gw^{br} \Theta_{sbj} \Theta_{ra}^s + \tfrac{1}{2} J_k^a\gw^{br} \Theta_{sbj} \Theta_{ar}^s \\
 &= \tfrac{1}{2} J_k^a\gw^{br} \N_b \Theta_{jar} - \tfrac{1}{2} J_k^a\gw^{br} \N_b \Theta_{arj}
 + \tfrac{1}{2} J_k^a\gw^{br} \Theta_{jsb} \Theta_{ra}^s - \tfrac{1}{2} J_k^a \gw^{br}\Theta_{bsj}\Theta_{ar}^s\\
 &\hsp +\tfrac{1}{2} J_k^a\gw^{br} \prs{\Theta_{jb}^s - \Theta_{bj}^s} \Theta_{sar}
 + \tfrac{1}{2} J_k^a \gw^{br} \Theta_{sbj}\prs{\Theta_{ar}^s -\Theta_{ra}^s}.
\end{align*}
We now address Row 2. For this, applying Lemma \ref{lem:ThetaPsi}.
\begin{align*}
\prs{\text{R2}}_{jk} &= \tfrac{1}{2} J_k^a \gw^{br} \prs{-\N_j  \Theta_{arb} + \N_a \Theta_{jrb} + \N_r \prs{\Theta_{ajb}- \Theta_{jab}}}\\
&=\tfrac{1}{2}\N_j   \vartheta_k+ \tfrac{1}{2}J_k^a  J_j^u\N_a \vartheta_u - \tfrac{1}{2}J_k^a \gw^{br} \N_b \Theta_{ajr}+ \tfrac{1}{2}J_k^a \gw^{br} \N_b\Theta_{jar}.
\end{align*}
\noindent Lastly, for Row 3, we expand out and search for simplifications
\begin{align*}
\prs{\text{R3}}_{jk} &= -\tfrac{1}{2} J_k^a \gw^{br} \prs{ \prs{\Theta_{srb}- \Theta_{rsb} }\prs{\Theta_{aj}^s -\Theta_{ja}^s} + \prs{\Theta_{sjb} -\Theta_{jsb}} \prs{ \Theta_{ra}^s - \Theta_{ar}^s}} \\
&\hsp  + \tfrac{1}{2} J_k^a \gw^{br}\prs{\prs{\Theta_{sar} - \Theta_{asr}} \prs{\Theta_{jb}^s -\Theta_{bj}^s}} \\
 &=  -\tfrac{1}{2} J_k^a J_s^p\vartheta_p \prs{\Theta_{aj}^s -\Theta_{ja}^s} - \tfrac{1}{2} J_k^a \gw^{br} \Theta_{sbj} \prs{ \Theta_{ar}^s -  \Theta_{ra}^s} -\tfrac{1}{2} J_k^a \gw^{br}  \Theta_{jsb} \prs{ \Theta_{ar}^s -  \Theta_{ra}^s} \\
&\hsp  + \tfrac{1}{2} J_k^a \gw^{br}\Theta_{sar}\prs{\Theta_{jb}^s -\Theta_{bj}^s}  -  \tfrac{1}{2} J_k^a \gw^{br}\Theta_{asr} \prs{\Theta_{jb}^s -\Theta_{bj}^s}.
\end{align*}
Combining everything we have that
\begin{align*}
\cV_{jk}
 &= \tfrac{1}{2}J_k^a \cP_{ja} + J_k^a\gw^{br} \N_b \Theta_{jar}  + \tfrac{1}{2}\N_j   \vartheta_k+ \tfrac{1}{2}J_k^a  J_j^u\N_a \vartheta_u \\
 &\hsp +  J_k^a \gw^{br} \prs{\Theta_{ar}^s - \Theta_{ra}^s}  \Theta_{jbs} +J_k^a\gw^{br} \prs{\Theta_{jb}^s - \Theta_{bj}^s} \Theta_{sar} + \tfrac{1}{2} J_k^a J_s^p\vartheta_p \prs{\Theta_{ja}^s - \Theta_{aj}^s},
\end{align*}
as desired.
\end{proof}
\end{lemma}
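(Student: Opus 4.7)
The plan is to exploit the rich symmetries of the Chern curvature tensor together with the first Bianchi-type identity (Lemma~\ref{lem:omegacyc}) and the symmetry-failure identity (Lemma~\ref{lem:Chernsymm}) to produce an equation in which $\cV_{jk}$ appears on both sides. We then solve for $\cV_{jk}$ and clean up by translating all residual torsion and $T$ terms into $\Theta$ terms using the identity $\tau_{ijk} = \Theta_{jik} - \Theta_{ijk}$, simplifying contractions with the Lee form via Lemma~\ref{lem:ThetaPsi}.

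First, I would use Lemma~\ref{lem:Cherncurvsym} (the $J$-invariance of $\Omega$ in the last two slots) together with the identity $g^{re}J_e^b = \omega^{br}$ to convert the metric trace defining $\cV_{jk} = g^{re}\Omega_{rjke}$ into an $\omega$-trace carrying a $J_k^a$ factor:
\[
\cV_{jk} = J_k^a\,\omega^{br}\,\Omega_{rjab}.
\]
Next, I would apply Lemma~\ref{lem:omegacyc} to $\Omega_{rjab}$ to express it as $-\Omega_{jarb}-\Omega_{arjb}$ plus three $\nabla\tau$ terms and three $\tau\cdot\tau$ terms. The term $-J_k^a\omega^{br}\Omega_{jarb}$ immediately assembles into $J_k^a\cP_{ja}$ by the definition of $\cP$ and the antisymmetry of $\omega$. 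For the term $-J_k^a\omega^{br}\Omega_{arjb}$, I would rewrite $\Omega_{arjb} = T_{arjb} + \Omega_{jbar}$ using Lemma~\ref{lem:Chernsymm}, and then observe that $J_k^a\omega^{br}\Omega_{jbar}$ equals $\cV_{jk}$ again, using once more Lemma~\ref{lem:Cherncurvsym} together with the identity $\omega^{br}J_r^\beta = g^{b\beta}$. This reappearance lets me solve for $\cV_{jk}$, producing the key factor of $1/2$ and identifying three contributions: $\tfrac{1}{2}J_k^a\cP_{ja}$, a $T$-term, and the Bianchi corrections carried by $\nabla\tau$ and $\tau\cdot\tau$.

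The third stage is bookkeeping: I would expand $\omega^{br}T_{arjb}$ via the explicit formula from Lemma~\ref{lem:Chernsymm}, rewrite every $\tau_{ijk}$ as $\Theta_{jik} - \Theta_{ijk}$, and split the resulting expression into three rows matching the pattern of the proof. Within each row, $\omega$-traces of $\Theta$ acting on a single index position collapse to Lee-form contractions by Lemma~\ref{lem:ThetaPsi}; in particular, the $\nabla\tau$ terms in Row~2 give $\tfrac{1}{2}\nabla_j\vartheta_k + \tfrac{1}{2}J_k^a J_j^u\nabla_a\vartheta_u$, and the $\tau\cdot\tau$ terms in Row~3 contribute a single surviving $\vartheta\cdot\Theta$ piece of the form $\tfrac{1}{2}J_k^a J_s^p\vartheta_p(\Theta_{ja}^s - \Theta_{aj}^s)$. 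Adding the rows and cancelling compatible $\nabla\Theta$ contributions between Row~1 and Row~2 yields the stated formula.

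The main obstacle is purely organizational: several $\nabla\Theta$ terms with differently ordered indices appear in Rows~1 and 2, and must cancel in pairs to leave only the single $J_k^a\omega^{br}\nabla_b\Theta_{jar}$ on the right-hand side. Likewise, the many $\Theta\cdot\Theta$ terms coming from $T$ and from the Bianchi corrections must be reorganized into the antisymmetrized combinations $(\Theta_{ar}^s - \Theta_{ra}^s)$ and $(\Theta_{jb}^s - \Theta_{bj}^s)$ that appear in the statement. I would handle this by carefully labelling each contraction by its contraction pattern (which pair of $\Theta$-slots are summed against $\omega^{br}$) before attempting any simplification, since the pattern determines whether Lemma~\ref{lem:ThetaPsi} applies or whether a term survives in an antisymmetrized form.
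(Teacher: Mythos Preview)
Your proposal is correct and follows essentially the same strategy as the paper: convert $\cV_{jk}$ to $J_k^a\omega^{br}\Omega_{rjab}$ via Lemma~\ref{lem:Cherncurvsym}, apply the Bianchi-type identity of Lemma~\ref{lem:omegacyc}, use Lemma~\ref{lem:Chernsymm} to make $\cV_{jk}$ reappear and solve with the factor $\tfrac12$, then convert $\tau\to\Theta$ and simplify row by row with Lemma~\ref{lem:ThetaPsi}. The only cosmetic difference is that the paper gets the reappearance of $\cV_{jk}$ from the antisymmetries $\omega^{br}\Omega_{jbar}=\omega^{br}\Omega_{rjab}$ directly, whereas you route it through the $J$-invariance identity $\omega^{br}J_r^\beta=g^{b\beta}$; both are valid.
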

\begin{cor}\label{cor:RcJPtotdif} For $(M^n, \gw, J)$ almost Hermitian,
\begin{align*}
\Rc_{jk}
&= \tfrac{1}{2}J_k^a \cP_{ja}  - \tfrac{1}{2}\N_j   \vartheta_k+ \tfrac{1}{2}J_k^a  J_j^u\N_a \vartheta_u + g^{il}\N_i \Theta_{jkl} + J_k^a\gw^{br} \N_b \Theta_{jar} \\
 &\hsp +  J_k^a \gw^{br} \prs{\Theta_{ar}^s - \Theta_{ra}^s}  \Theta_{jbs} +J_k^a\gw^{br} \prs{\Theta_{jb}^s - \Theta_{bj}^s} \Theta_{sar} + \tfrac{1}{2} J_k^a J_s^p\vartheta_p \prs{\Theta_{ja}^s - \Theta_{aj}^s} \\
 &\hsp + \vartheta_s\Theta_{jk}^s - \Theta_{ij}^s\Theta_{sk}^i.
\end{align*}
\begin{proof}
Combining Lemma \ref{lem:VnJP} with Corollary \ref{cor:RcncV} yields the result.
\end{proof}
\end{cor}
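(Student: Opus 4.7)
The statement is a direct consequence of the two results immediately preceding it, so the plan is simply to substitute the formula for $\cV_{jk}$ supplied by Lemma \ref{lem:VnJP} into the expression for $\Rc_{jk}$ supplied by Corollary \ref{cor:RcncV}. No new geometric input is needed; the work consists of careful bookkeeping of the terms that appear on both sides.

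First I would rewrite Corollary \ref{cor:RcncV} in the schematic form $\Rc_{jk} = \cV_{jk} + (g^{il}\N_i\Theta_{jkl}) - \N_j\vartheta_k + (\text{quadratic in }\Theta)$, and then paste in the full right-hand side of Lemma \ref{lem:VnJP} in place of $\cV_{jk}$. The principal term $\tfrac12 J_k^a \cP_{ja}$ and the divergence $J_k^a \gw^{br}\N_b\Theta_{jar}$ come over untouched, as do the four quadratic-in-$\Theta$ expressions at the end of Lemma \ref{lem:VnJP} and the two quadratic leftovers $\vartheta_s\Theta_{jk}^s - \Theta_{ij}^s\Theta_{sk}^i$ from Corollary \ref{cor:RcncV}. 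The divergence $g^{il}\N_i\Theta_{jkl}$ is carried across as is.

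The only genuine cancellation to monitor is among the first-order terms involving $\N\vartheta$. Lemma \ref{lem:VnJP} contributes $\tfrac12\N_j\vartheta_k + \tfrac12 J_k^a J_j^u \N_a\vartheta_u$, while Corollary \ref{cor:RcncV} contributes an additional $-\N_j\vartheta_k$. Adding these yields $-\tfrac12 \N_j\vartheta_k + \tfrac12 J_k^a J_j^u \N_a\vartheta_u$, which is precisely what appears in the statement of Corollary \ref{cor:RcJPtotdif}.

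There is no real obstacle here beyond the clerical task of ensuring that no term is dropped or double-counted in the substitution; the proof of the corollary amounts to the one-line observation that Lemma \ref{lem:VnJP} and Corollary \ref{cor:RcncV} share exactly the term $\cV_{jk}$, and eliminating it gives the claimed identity.
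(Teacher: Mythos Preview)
Your proposal is correct and matches the paper's approach exactly: the paper's proof is the single sentence ``Combining Lemma~\ref{lem:VnJP} with Corollary~\ref{cor:RcncV} yields the result,'' and your write-up simply makes explicit the one cancellation ($\tfrac12\N_j\vartheta_k - \N_j\vartheta_k = -\tfrac12\N_j\vartheta_k$) that this substitution entails.
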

Provided the explicit comparison formula of the Ricci tensors for both connections, we now focus in on analysing the highest order terms in preparation for future symbol computations. We let $\brk{ \cdot }_2$ denotes the projection to second derivatives of the primitives $(g,J)$.
\begin{prop}\label{prop:RcJPasN} For $(M^n, \gw, J)$ almost Hermitian,
\begin{align}
\begin{split}\label{prop:Rcdiffagain}
\brk{\Rc_{jk} - \tfrac{1}{2}J_k^a \cP_{ja} }_{2} &=  \brk{\tfrac{1}{4}   \N^l N_{klj}  -\tfrac{1}{2} \N_j   \vartheta_k + \tfrac{1}{2} J_k^a  J_j^y \N_a \vartheta_y - \tfrac{1}{2} \N^e \prs{ J_e^r \psi_{jkr} + J_k^a \psi_{jae}}}_2.
\end{split}
\end{align}
\begin{proof} Isolating the highest order terms of Corollary \ref{cor:RcJPtotdif} yields
\begin{align}
\begin{split}\label{eq:RcVHO}
\brk{\Rc_{jk}}_{2}
&= \brk{\tfrac{1}{2}J_k^a \cP_{ja} - \tfrac{1}{2}\N_j   \vartheta_k+ \tfrac{1}{2}J_k^a  J_j^u\N_a \vartheta_u + g^{il}\N_i \Theta_{jkl} + J_k^a\gw^{br} \N_b \Theta_{jar}}_2.
\end{split}
\end{align}
We will expand out the last two terms involving $\Theta$. Using Lemma \ref{lem:ThetaNPsi} we have
\begin{align*}
g^{il}\N_i \Theta_{jkl} &= \tfrac{1}{8}\N^l  N_{klj} + \tfrac{1}{2} J_j^u \N^l \Psi_{ukl}.
\end{align*}
Next we compute
\begin{align*}
J_k^a\gw^{br} \N_b \Theta_{jar} &= J_k^a\gw^{br} \N_b \prs{\tfrac{1}{8} N_{arj} + \tfrac{1}{2} J_j^u \Psi_{uar}} \\
&= J_k^a\gw^{br} \N_b \prs{ - \tfrac{1}{8} J_a^m J_r^n N_{mnj} + \tfrac{1}{2} J_j^u \Psi_{uar}} \\
&=   \tfrac{1}{8}   \N^l N_{klj} + \tfrac{1}{2}  J_k^a\gw^{br} J_j^u \N_b \Psi_{uar}.
\end{align*}
Thus, updating \eqref{eq:RcVHO} we conclude that
\begin{align*}
\brk{\Rc_{jk} - \tfrac{1}{2}J_k^a \cP_{ja} }_{2} = \brk{  \tfrac{1}{4}   \N^l N_{klj}  -\tfrac{1}{2} \N_j   \vartheta_k + \tfrac{1}{2} J_k^a  J_j^y \N_a \vartheta_y+ \tfrac{1}{2} J_j^u \prs{\N^l \Psi_{ukl} - \tfrac{1}{2}  J_k^a J^r_e  \N^e \Psi_{uar}}}_2.
\end{align*}
We will address the last quantity. For this, first consider
\begin{align*}
\N^e \Psi_{uke} &= \tfrac{1}{2}\N^e \prs{J_k^a J_e^r\psi_{uar} + J_u^y J_e^r \psi_{ykr} +J_u^y J_k^a \psi_{yae} + \psi_{uke}}.
\end{align*}
Next we compute out,
\begin{align*}
 J_k^a J^r_e \N^e \Psi_{uar} &= \tfrac{1}{2} \N^e \prs{ J_k^a J^r_e J_a^q J_r^h\psi_{uqh} +  J_k^a J^r_e J_u^y J_r^h \psi_{yah} + J_k^a J^r_e J_u^y J_a^q \psi_{yqr} +  J_k^a J^r_e \psi_{uar}} \\
 &= \tfrac{1}{2}\N^e \prs{  \psi_{uke} -  J_k^a J_u^y \psi_{yae} - J^r_e J_u^y  \psi_{ykr} +  J_k^a J^r_e \psi_{uar}}.
\end{align*}
Combining we obtain that
\begin{align*}
\N^e \Psi_{uke} -  J_k^a J^r_e \N^e \Psi_{uar} &= \N^e \prs{ J_u^y J_e^r \psi_{ykr} +J_u^y J_k^a \psi_{yae}}.
\end{align*}
Combining yields the result.
\end{proof}
\end{prop}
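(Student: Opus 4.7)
The plan is to start from the full formula for $\Rc_{jk} - \tfrac{1}{2}J_k^a \cP_{ja}$ given in Corollary \ref{cor:RcJPtotdif} and discard all terms that are quadratic in the contorsion $\Theta$ or in the Lee form $\vartheta$, since both of these tensors are built from at most first derivatives of $(g,J)$ and so their products contribute only lower order. This leaves the second-order piece
\begin{align*}
\brk{\Rc_{jk}-\tfrac{1}{2}J_k^a\cP_{ja}}_2
&=\brk{-\tfrac{1}{2}\N_j\vartheta_k+\tfrac{1}{2}J_k^aJ_j^u\N_a\vartheta_u+g^{il}\N_i\Theta_{jkl}+J_k^a\gw^{br}\N_b\Theta_{jar}}_2,
\end{align*}
so the core of the proof is to rewrite the two contorsion divergences using the decomposition $\Theta_{ijk}=\tfrac{1}{8}N_{jki}+\tfrac{1}{2}J_i^p\Psi_{pjk}$ from Lemma \ref{lem:ThetaNPsi}.

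Next, I would expand each divergence separately. The $g$-trace is immediate: $g^{il}\N_i\Theta_{jkl}=\tfrac{1}{8}\N^l N_{klj}+\tfrac{1}{2}J_j^u\N^l\Psi_{ukl}$. For the $\gw$-trace, the key device is the $J$-antiinvariance of $N$ in the first two slots, which converts $\gw^{br}J_k^aN_{arj}$ into $-\N^l N_{klj}$ up to sign; combined with the $\Psi$ piece this gives $J_k^a\gw^{br}\N_b\Theta_{jar}=\tfrac{1}{8}\N^l N_{klj}+\tfrac{1}{2}J_k^a\gw^{br}J_j^u\N_b\Psi_{uar}$. Adding the two divergences produces the coefficient $\tfrac{1}{4}\N^l N_{klj}$ predicted by the statement, together with a residual $\Psi$-expression $\tfrac{1}{2}J_j^u\bigl(\N^l\Psi_{ukl}-\tfrac{1}{2}J_k^aJ_e^r\N^e\Psi_{uar}\bigr)$ that still needs to be identified with the $\psi$-terms on the right hand side.

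The final and most delicate step is then to unwind this $\Psi$-residual using the explicit formula \eqref{eq:Psidef}. Writing out $\N^e\Psi_{uke}$ as an average over the four terms of $\Psi$, then doing the same for $J_k^aJ_e^r\N^e\Psi_{uar}$ and carefully commuting the various $J$ factors (noting that $J_a^qJ_k^a=-\delta_k^q$ etc.), two of the four types of terms cancel while the other two combine to yield the clean identity
\begin{align*}
\N^e\Psi_{uke}-J_k^aJ_e^r\N^e\Psi_{uar}&=\N^e\bigl(J_u^yJ_e^r\psi_{ykr}+J_u^yJ_k^a\psi_{yae}\bigr).
\end{align*}
Multiplying by $\tfrac{1}{2}J_j^u$ and absorbing a sign via $J_j^uJ_u^y=-\delta_j^y$ produces exactly $-\tfrac{1}{2}\N^e(J_e^r\psi_{jkr}+J_k^a\psi_{jae})$, matching the statement.

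The main obstacle I expect is bookkeeping in this last step: there are four terms in $\Psi$, each picked up twice with different $J$-contractions, and tracking which cancel versus which survive requires careful handling of the signs produced by $J^2=-\Id$. The $N$-divergence identification is comparatively straightforward once one recognizes that the $J$-antiinvariance of $N$ turns an $\gw$-contraction against $J_k^a$ into an ordinary $g$-trace. Since the desired formula only cares about the symbolic (second-order) part, no Bianchi or symmetry identity beyond what has already been established in Section 2 is needed.
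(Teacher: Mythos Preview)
Your proposal is correct and follows essentially the same route as the paper's proof: isolate the second-order terms from Corollary~\ref{cor:RcJPtotdif}, substitute the decomposition of $\Theta$ from Lemma~\ref{lem:ThetaNPsi} into the two contorsion divergences (using the $J$-antiinvariance of $N$ for the $\gw$-trace), then reduce the residual $\Psi$-combination to the stated $\psi$-expression via the explicit formula \eqref{eq:Psidef}. The intermediate identities you write down, including the key simplification $\N^e\Psi_{uke}-J_k^aJ_e^r\N^e\Psi_{uar}=\N^e\bigl(J_u^yJ_e^r\psi_{ykr}+J_u^yJ_k^a\psi_{yae}\bigr)$, match the paper exactly.
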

\section{Construction of the flow}
We aim to derive the structure of a flow for almost complex structures which matches with Ricci flow on the metrics (as displayed in \eqref{eq:AHCF}). To do so, we begin investigating the operator introduced in the work of of the symplectic curvature flow \cite{ST} (pp. 182) but rather in the almost Hermitian setting and modify accordingly. Let us write the flow we will construct as follows:
\begin{equation}\label{eq:AHCF-1}
\begin{cases}
\prs{\tfrac{\del g}{\del t}}_{bc} &= - 2 \Rc_{bc} \\
\prs{\tfrac{\del J}{\del t}}_{a}^c &= - \prs{\cP_{av} - 2 J_a^y \Rc_{yv} }^{2,0+0,2} g^{vc} + \mho_{av} g^{vc},
\end{cases}
\end{equation}
where $\mho \in TM_{\otimes 2}$ will be determined. The first term in the flow on $J$ corresponds to the operator
\begin{align}
\begin{split}\label{eq:SCFoperater}
J_a^c \mapsto J_a^k \prs{2 \Rc_{jk}- J_k^a \cP_{ja}}g^{jc}.
\end{split}
\end{align}
In order to preserve the properties of $J_t$ as an almost complex structure, the right side of the flow on $J$ in \eqref{eq:AHCF-1} must be $J$-antiinvariant after we lower the superscript to subscript by the metric $g$. On other other hand, if we require \eqref{eq:AHCF-1} to preserve the Hermitian structure, we have
\begin{align*}
0 = \tfrac{\del}{\del t}\brk{g_{ab} - g_{pq} J_a^p J_b^q}
&=   -4\prs{\Rc_{ab}}^{2,0+0,2}  - g_{pq} g^{vp} \prs{ -\cP_{av}^{2,0+0,2} + 2 J_a^e \Rc^{2,0+0,2}_{ev} + \mho_{av}}J_b^q\\
&\hsp -  g_{pq} g^{qv}J_a^p \prs{-\cP_{bv}^{2,0+0,2} + 2 J_b^e \Rc_{ev}^{2,0+0,2} + \mho_{bv}} \\
&=  - 4\Rc_{ab}^{2,0+0,2} - g_{pq} g^{vp} \prs{ -\cP_{av}^{2,0+0,2} + 2 J_a^e \Rc^{2,0+0,2}_{ev} + \mho_{av}}J_b^q\\
&\hsp -  g_{pq} g^{qv}J_a^p \prs{-\cP_{bv}^{2,0+0,2} + 2 J_b^e \Rc_{ev}^{2,0+0,2} + \mho_{bv}} \\
&=  - 4\Rc_{ab}^{2,0+0,2} - \prs{ -\cP_{aq}^{2,0+0,2} + 2 J_a^e \Rc^{2,0+0,2}_{eq} + \mho_{aq}}J_b^q\\
&\hsp -  J_a^p \prs{-\cP_{bp}^{2,0+0,2} + 2 J_b^e \Rc_{ep}^{2,0+0,2} + \mho_{bp}} \\
&= - J_b^q\prs{ \mho_{aq} + \mho_{qa}}.
\end{align*}
Therefore
\begin{align*}
\prs{\mho_{aw}}_{\text{sym}} &= 0.
\end{align*}
This means that $\mho$ must be skew-symmetric, but we have freedom in choosing $\mho$ so long it is skew-symmetric and $J$-antiinvariant. As we tailor $\mho$ to produce the proper (psuedo)parabolic flow, we need to compute the symbol of the operator in \eqref{eq:SCFoperater}. Using Proposition \ref{prop:RcJPasN} above we reexpress this, up to highest order, as
\begin{align}
\begin{split}\label{eq:operator decomp}
\brk{ 2 J_a^k \prs{ \Rc_{jk}- \tfrac{1}{2} J_k^v \cP_{jv} }^{2,0+0,2} g^{jc}}_2
&=  \brk{-\tfrac{1}{2}  J_a^k \N^l N_{lk}^c+ \tfrac{1}{2}\prs{\mathcal{A}(J)}_a^c + \tfrac{1}{2} \prs{\mathcal{B}(J)}_a^c}_2,
\end{split}
\end{align}
where we have set
\begin{align}
\begin{split}\label{eq:Psi}
\tfrac{1}{2}\prs{\mathcal{A}(J)}_a^c &\triangleq
 2 \gw^{cy} \prs{\N_y   \vartheta_a}^{2,0+0,2}_{\text{sym}},  \\
\tfrac{1}{2} \prs{\mathcal{B}(J)}_a^c &\triangleq g^{jc} \prs{\N^e \psi_{jae} -  J_a^kJ_e^r\N^e \psi_{jkr}}^{2,0+0,2}.
\end{split}
\end{align}
Note that both $\mathcal{A}$ and $\mathcal{B}$ vanish in the almost K\"{a}hler setting. Since we know the behaviour of the first term via Proposition 5.4 of \cite{ST}, we will investigate these latter terms. We begin with an identification of $\mathcal{A}$ which will be relevant to us later.

\begin{lemma}\label{lem:A(J)class}
We have that
\begin{equation*}
\tfrac{1}{2}\prs{\mathcal{A}(J)}_a^c = \gw^{cy} \prs{\mathcal{L}_{\vartheta} g}^{2,0+0,2}_{ya} - \tfrac{1}{8} \gw^{cy} \vartheta^d \prs{N_{dya} + N_{day}}.
\end{equation*}
\begin{proof}
We first write the Lie derivative using the Levi-Civita connection and then convert to the Chern connection using Lemma \ref{lem:ThetaNPsi}.
\begin{align}\label{eq:Lvarthetag}
\begin{split}
\prs{\mathcal{L}_{\vartheta} g}_{ij} &= \prs{D_i \vartheta_j} + \prs{D_j \vartheta_i} \\
&= \prs{\N_i \vartheta_j}+ \prs{\N_j \vartheta_i} - \Theta_{ijs} \vartheta^s  - \Theta_{jis} \vartheta^s \\
&= \prs{\N_i \vartheta_j}+ \prs{\N_j \vartheta_i} - \prs{\tfrac{1}{8} N_{jdi} + \tfrac{1}{2} J_i^p \Psi_{pjd} }\vartheta^d  - \prs{\tfrac{1}{8} N_{idj} + \tfrac{1}{2} J_j^p \Psi_{pid} }\vartheta^d \\
&= \prs{ \prs{\N_i \vartheta_j} + \prs{\N_j \vartheta_i}} + \tfrac{1}{8} \vartheta^d \prs{N_{dji} + N_{dij}} + \vartheta^d \prs{J_i^p \Psi_{pdj} + J_j^p \Psi_{pdi}}.
\end{split}
\end{align}
%
%
Then, projecting onto the $(2,0)+(0,2)$ component we have that
\begin{align*}
\prs{\mathcal{L}_{\vartheta} g}^{2,0+0,2}_{ij} &= \prs{ \prs{\N_i \vartheta_j} + \prs{\N_j \vartheta_i}}^{2,0+0,2} + \tfrac{1}{8} \vartheta^d \prs{N_{dji} + N_{dij}} + \vartheta^d \prs{J_i^p \Psi_{pdj} + J_j^p \Psi_{pdi}}^{2,0+0,2}.
\end{align*}
Now, for the final term we note that in fact
\begin{align*}
2 \prs{J_i^p \Psi_{pdj} + J_j^p \Psi_{pdi}}^{2,0+0,2} &= J_i^p \Psi_{pdj} + J_j^p \Psi_{pdi} - J_i^m J_j^n J_m^p \Psi_{pdn} - J_i^m J_j^n J_n^p \Psi_{pdm} \\
&= J_i^p \Psi_{pdj} + J_j^p \Psi_{pdi} - J_i^m J_j^n J_m^p \Psi_{pdn} - J_i^m J_j^n J_n^p \Psi_{pdm} \\
&= J_i^p \Psi_{pdj} + J_j^p \Psi_{pdi} +  J_j^p \Psi_{idp} + J_i^p  \Psi_{jdp} \\
&= J_i^p \Psi_{pdj} + J_j^p \Psi_{pdi} -  J_j^p \Psi_{pdi} - J_i^p  \Psi_{pdj} \\
&= 0.
\end{align*}
So updating \eqref{eq:Lvarthetag} further yields
\begin{align*}
\prs{\mathcal{L}_{\vartheta} g}^{2,0+0,2}_{ij} &= 2 \prs{ \prs{\N_i \vartheta_j} }^{2,0+0,2}_{sym} + \tfrac{1}{8} \vartheta^d \prs{N_{dji} + N_{dij}}.
\end{align*}
In particular, it follows that
\begin{equation*}
\tfrac{1}{2}\prs{\mathcal{A}(J)}_a^c = 2 \gw^{cy} \prs{\N_y \vartheta_a}^{2,0+0,2}_{\text{sym}} = \gw^{cy} \prs{\mathcal{L}_{\vartheta} g}^{2,0+0,2}_{ya} - \tfrac{1}{8} \gw^{cy} \vartheta^d \prs{N_{dya} + N_{day}},
\end{equation*}
which yields the result.
\end{proof}
\end{lemma}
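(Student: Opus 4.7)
The plan is to start from the definition
\[
\tfrac{1}{2}\prs{\mathcal{A}(J)}_a^c = 2\gw^{cy}\prs{\N_y \vartheta_a}^{2,0+0,2}_{\text{sym}},
\]
and convert the symmetrised Chern derivative of $\vartheta$ into a Lie derivative plus torsion corrections. Because $\gw^{cy}$ sits outside, the whole computation can be done on the two-tensor $2(\N_y\vartheta_a)^{2,0+0,2}_{\text{sym}}$ and then the $\gw$-contraction reinstated at the end. So the first step is to write
\[
\prs{\mathcal{L}_{\vartheta} g}_{ij} = D_i \vartheta_j + D_j \vartheta_i,
\]
and use $D = \N + \Theta$ together with the identification $\Theta_{ijk} = \tfrac{1}{8}N_{jki} + \tfrac{1}{2} J_i^p \Psi_{pjk}$ from Lemma \ref{lem:ThetaNPsi} to obtain
\[
\prs{\mathcal{L}_{\vartheta} g}_{ij} = \prs{\N_i \vartheta_j + \N_j \vartheta_i} + \tfrac{1}{8}\vartheta^d\prs{N_{dji}+N_{dij}} + \vartheta^d\prs{J_i^p\Psi_{pdj} + J_j^p\Psi_{pdi}},
\]
after reindexing the contorsion contraction.

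Next I would project this identity onto the $(2,0)+(0,2)$ component in $(ij)$. The $N$-piece is already $J$-antiinvariant in $(ij)$ pairwise (this is immediate from the $J$-antiinvariance properties of $N_{\mathbf{ijk}}$ recorded right after the definition of the Nijenhuis tensor), so it passes through the projection unchanged. The symmetrised Chern derivative piece is handled by the definition of $(\cdot)^{2,0+0,2}_{\text{sym}}$. The crux of the argument is to show that the $\Psi$-correction
\[
\vartheta^d\prs{J_i^p\Psi_{pdj} + J_j^p\Psi_{pdi}}^{2,0+0,2}
\]
vanishes identically. This is the main obstacle, but it is a short algebraic identity: apply the projector, use $J^2 = -\Id$ to convert $J_m^p\Psi_{pdn}$ into $-\Psi_{mdp}J_n^p$-type expressions via the $J$-antiinvariance pattern of $\Psi$ (which inherits from the definition \eqref{eq:Psidef}), and watch the four terms cancel in pairs.

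Once the $\Psi$-contribution is eliminated, one is left with
\[
\prs{\mathcal{L}_{\vartheta} g}^{2,0+0,2}_{ij} = 2\prs{\N_i\vartheta_j}^{2,0+0,2}_{\text{sym}} + \tfrac{1}{8}\vartheta^d\prs{N_{dji}+N_{dij}}.
\]
Relabeling $(i,j)\mapsto (y,a)$, multiplying by $\gw^{cy}$, and rearranging to solve for $2\gw^{cy}(\N_y\vartheta_a)^{2,0+0,2}_{\text{sym}}$ then produces exactly
\[
\tfrac{1}{2}\prs{\mathcal{A}(J)}_a^c = \gw^{cy}\prs{\mathcal{L}_{\vartheta} g}^{2,0+0,2}_{ya} - \tfrac{1}{8}\gw^{cy}\vartheta^d\prs{N_{dya}+N_{day}},
\]
completing the proof. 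The only calculation worth care is the $\Psi$-cancellation; everything else is bookkeeping between Levi-Civita and Chern connections.
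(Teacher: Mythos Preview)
Your proposal is correct and follows essentially the same route as the paper: expand $\mathcal{L}_{\vartheta} g$ via $D = \N + \Theta$, substitute the formula for $\Theta$ from Lemma~\ref{lem:ThetaNPsi}, project onto the $(2,0)+(0,2)$ part, and verify the $\Psi$-contribution cancels algebraically while the $N$-contribution survives intact by $J$-antiinvariance. The paper carries out the $\Psi$-cancellation explicitly in four lines, exactly as you outline.
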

For the following computations we now focus on analysing the symbol of the operator $\mathcal{A}(J)$. Let $\brk{\cdot}_{2/g}$ denote the projection onto second order operators in $J$, \emph{excluding} second order operators on $g$.
\begin{prop}\label{prop:dim4leeform}
The symbol of the operator $J \mapsto \mathcal{A} (J)$ is trivial.
\end{prop}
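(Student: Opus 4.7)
The plan is to leverage the identification from Lemma \ref{lem:A(J)class},
\begin{equation*}
\tfrac{1}{2}\mathcal{A}(J)_a^c = \gw^{cy}(\mathcal{L}_{\vartheta} g)^{2,0+0,2}_{ya} - \tfrac{1}{8}\gw^{cy}\vartheta^d(N_{dya}+N_{day}),
\end{equation*}
and to locate the only source of second-order $J$-derivatives. The $\vartheta^d N$ correction is quadratic in first-order quantities and hence killed by $\brk{\cdot}_{2/g}$, so the principal second-order-in-$J$ symbol of $\mathcal{A}$ comes entirely from $\gw^{cy}(\mathcal{L}_{\vartheta}g)^{2,0+0,2}_{ya}$.

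First I would compute the principal symbol of the first-order operator $J \mapsto \vartheta$. Starting from $\vartheta_k = \tfrac{1}{2}\gw^{ji}\psi_{ijk}$ with $\psi = d\gw$ and a variation $h = \delta J$, one has $\delta\gw_{ij} = h_{ij} := h_i^a g_{aj}$, which is antisymmetric and of type $(2,0)+(0,2)$ because $h$ is $g$-skew and anticommutes with $J$. The pairing identity $\gw^{ij}h_{ij} = 0$ -- reflecting that a $(1,1)$-tensor contracted with a $(2,0)+(0,2)$-tensor vanishes -- collapses the first-order symbol to
\begin{equation*}
\phi_k := \sigma_\xi(\vartheta_k)(h) = \mu^j h_{jk}, \qquad \mu^j := \gw^{ji}\xi_i.
\end{equation*}

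The key maneuver is to re-express the $(2,0+0,2)$-projection of $\mathcal{L}_{\vartheta}g$ in terms of the Lie derivative $\mathcal{L}_{\vartheta} J$ of the almost complex structure itself. Differentiating the Hermitian compatibility $g(JX,JY)=g(X,Y)$ along the vector field $\vartheta^\sharp$ produces
\begin{equation*}
2(\mathcal{L}_{\vartheta}g)^{2,0+0,2}_{ij} = \gw_{ja}(\mathcal{L}_{\vartheta}J)_i^a + \gw_{ia}(\mathcal{L}_{\vartheta}J)_j^a.
\end{equation*}
Substituting into the expression for $\mathcal{A}$ and simplifying via $\gw^{cy}\gw_{yb}=\delta^c_b$ yields
\begin{equation*}
\mathcal{A}(J)_a^c = (\mathcal{L}_{\vartheta}J)_a^c + \gw^{cy}\gw_{ab}(\mathcal{L}_{\vartheta}J)_y^b.
\end{equation*}
Both summands are of pure $\mathcal{L}_{\vartheta} J$-type; at the principal-symbol level their contributions are tangent to the infinitesimal diffeomorphism action generated by $\vartheta^\sharp$ on the space of almost complex structures, and so constitute trivial symbol contributions in the sense relevant to the ``parabolic modulo diffeomorphisms'' analysis that drives the paper.

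The main obstacle is recognizing the diffeomorphism-tangent structure of the second, $\gw$-twisted summand; for this I expect to need auxiliary identities such as $J_a^q h_{jq} = -J_j^p h_{ap}$ (a direct consequence of $h_{ij}$ being of type $(2,0)+(0,2)$) and $J_j^p\mu^j = -\xi^p$ (from $\mu = J\xi^\sharp$), which together allow one to read off the Lie-derivative structure hidden in the $\gw^{cy}\gw_{ab}(\mathcal{L}_{\vartheta}J)_y^b$ term.
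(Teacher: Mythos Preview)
Your proposal contains a genuine gap: you conflate ``the symbol is trivial'' with ``the term is tangent to the diffeomorphism orbit,'' and these are not the same statement. The proposition asserts (and the paper proves) that the second-order symbol of $J\mapsto\mathcal{A}(J)$ is \emph{literally zero}; it is this vanishing that is later invoked so that $\mathcal{D}_1$ has the same symbol as $-\tfrac{1}{2}J_a^k\N^l N_{lk}^c-(\mathcal{L}_Z J)_a^c$. Your rewriting $\mathcal{A}(J)_a^c=(\mathcal{L}_\vartheta J)_a^c+\gw^{cy}\gw_{ab}(\mathcal{L}_\vartheta J)_y^b$ (up to lower order) is correct, but it does not by itself imply zero symbol. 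The point is that $\vartheta=\vartheta(J,\partial J)$ is already first order in $J$, so the second-order-in-$J$ part of $(\mathcal{L}_\vartheta J)_a^c=\vartheta^k\partial_k J_a^c-(\partial_k\vartheta^c)J_a^k+(\partial_a\vartheta^k)J_k^c$ comes from the last two summands and has symbol $-\xi_k\phi^c J_a^k+\xi_a\phi^k J_k^c$, which is nonzero for your $\phi_k=\mu^j h_{jk}$. The DeTurck mechanism allows one to \emph{absorb} a term of the form $\mathcal{L}_{V(J)}J$ by a gauge choice, but that is different from the symbol being zero, and in any case the $\gw$-twisted summand is not of this form.

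The paper's proof is a direct computation: it splits $\mathcal{A}=\mathcal{A}_1+\mathcal{A}_2$ along the two symmetrization summands in $2\gw^{cb}(\N_a\vartheta_b+\N_b\vartheta_a)^{2,0+0,2}$, expands each $(2,0)+(0,2)$ projection into two pieces, computes $\brk{\cdot}_{2/g}$ for each via an explicit formula for $\brk{\N_a\vartheta_b}_{2/g}$, and then pairs the linearization against a test variation $H$. The four contributions sum to $-2H^b_y\xi_b\xi_i K_u^i g^{yu}-2H_v^u\xi^v\xi_i K_u^i$, which vanishes because $H_{ab}:=H_a^m g_{mb}$ is skew. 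Your route could in principle be completed by carrying the symbol computation of the two $\mathcal{L}_\vartheta J$-type terms through and exhibiting the same cancellation, but as written you stop at the wrong conceptual shortcut instead of doing that computation.
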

\begin{proof}
For this we expand out
\begin{align}
\begin{split}\label{eq:Psidecomp}
(\mathcal{A} (J))_a^c =  4 \gw^{cb} \prs{\N_b   \vartheta_a}^{2,0+0,2}_{\text{sym}} =  2 \gw^{cb} \prs{\N_a   \vartheta_b + \N_b  \vartheta_a}^{2,0+0,2}
= \prs{\mathcal{A}_1(J)}_a^c + \prs{\mathcal{A}_2 (J)}_a^c.
\end{split}
\end{align}
For all terms present, we will need to understand derivatives of $\vartheta$ up to highest order. With this in mind, observe that
\begin{align*}
\N_a \vartheta_b &=\tfrac{1}{2} \gw^{ji} \N_a \psi_{ijb}\\
&= \tfrac{1}{2} \gw^{ji} \N_a \prs{\del_i \gw_{jb} + \del_j \gw_{bi} + \del_b \gw_{ij}}\\
&=\gw^{ji} \N_a \prs{\del_i \gw_{jb} }+\tfrac{1}{2} \gw^{ji} \N_a \prs{\del_b \gw_{ij}}.
\end{align*}
Then, we have that
\begin{align}
\begin{split}\label{eq:Nvartheta}
\brk{\N_a \vartheta_b}_{2 \slash{g}} &= \brk{- \gw^{ji}g_{ju} \N_a \prs{\del_i J_b^u }+\tfrac{1}{2} \gw^{ji} \N_a \prs{\del_b \gw_{ij}}}_{2 \slash{g}}\\
&=  \brk{J_u^i \N_a \prs{\del_i J_b^u }-\tfrac{1}{2}J_u^i \N_a \prs{\del_b J_i^u }}_{2 \slash{g}}\\
&= \brk{-J_b^u  \N_a \prs{\del_i J_u^i }-\tfrac{1}{2}J_u^i \N_a \prs{\del_b J_i^u }}_{2 \slash{g}}.
\end{split}
\end{align}
Now with reference to \eqref{eq:Psidecomp}, for $\prs{\mathcal{A}_1(J)}$, we expand out
\begin{align*}
\prs{\mathcal{A}_1(J)}_a^c = 2\gw^{cb} \prs{\N_a \vartheta_b}^{2,0+0,2} &=  \gw^{cb} \prs{\N_a \vartheta_b - J_a^w J_b^v \N_w \vartheta_v} \\
&= \underset{\mathcal{A}_{11}(J)}{\underbrace{\gw^{cb} \prs{\N_a \vartheta_b}}} +  \underset{\mathcal{A}_{12}(J)}{\underbrace{- \gw^{cb} \prs{J_a^w J_b^v \N_w \vartheta_v}}}.
\end{align*}
We address each subterm. First we have that
\begin{align*}
\brk{\mathcal{A}_{11}(J)}_{2 / g} &=\brk{ - J^b_v g^{cv} \prs{\N_a \vartheta_b}}_{2 / g}\\
&=\brk{  - J^b_v g^{cv} \prs{ -J_b^u  \N_a \prs{\del_i J_u^i }- \tfrac{1}{2} J_u^i \N_a \prs{\del_b J_i^u}}}_{2 / g}\\
&=\brk{ -  g^{cv}  \N_a \prs{\del_i J_v^i }+ \tfrac{1}{2}  J^b_v g^{cv}J_u^i \N_a \prs{\del_b J_i^u}}_{2 / g}.
\end{align*}
Likewise we compute
\begin{align*}
\brk{\mathcal{A}_{12}(J)}_{2/g} &=\brk{  J^b_r g^{cr} \prs{J_a^w J_b^v \N_w \vartheta_v}}_{2 / g}\\
&=\brk{  - J_a^w g^{cv} \prs{\N_w \vartheta_v}}_{2 / g}\\
&=\brk{  - J_a^w g^{cv}  \prs{-J_v^u  \N_w \prs{\del_i J_u^i } - \tfrac{1}{2} J_u^i \N_w \prs{\del_v J_i^u}}}_{2 / g} \\
&= \brk{ J_a^w J_v^u g^{cv} \N_w \prs{\del_i J_u^i } +\tfrac{1}{2}  J_a^w g^{cv}  J_u^i \N_w \prs{\del_v J_i^u}}_{2 / g}.
\end{align*}
We now the linearisation of each and compute the off diagonal inner product with another $J$-variation, $H$. Keep in mind that for all $J$-variations, we have that $J_a^e H_e^b = - H_a^e J_e^b$.
\begin{align*}
\ip{\mathcal{L}_{\mathcal{A}_{11}}^{\xi}(K) , H} &=-  g^{cv} \prs{   \xi_a \xi_i K_v^i } g^{ar} H^e_r g_{ce}\\
&=- \prs{   \xi^r \xi_i K_v^i } H^v_r .\\
\ip{\mathcal{L}_{\mathcal{A}_{12}}^{\xi}(K) , H} &=  J_v^u J_a^w g^{cv}\prs{ \xi_w \xi_i K_u^i } g^{ar} H^e_r g_{ce} \\
&= - H_v^u \prs{ \xi^v \xi_i K_u^i }.
\end{align*}
Taking the sum we conclude that for $\mathcal{A}_1$,
\begin{align*}
\ip{\mathcal{L}_{\mathcal{A}_{1}}^{\xi}(K) , H} &=-2 H_v^u  \prs{ \xi^v \xi_i K_u^i }.
\end{align*}
Next we address term $\mathcal{A}_2$ of \eqref{eq:Psidecomp},
\begin{align*}
\mathcal{A}_{2}(J) =2 \gw^{cb} \prs{\N_b \vartheta_a}^{2,0+0,2} &= \gw^{cb} \prs{\N_b \vartheta_a - J_b^w J_a^v \N_w \vartheta_v} \\
&= \underset{\mathcal{A}_{21}(J)}{\underbrace{\gw^{cb} \prs{\N_b \vartheta_a}}} +  \underset{\mathcal{A}_{22}(J)}{\underbrace{-  \gw^{cb} \prs{J_b^w J_a^v \N_w \vartheta_v}}}.
\end{align*}
For the first term we compute
\begin{align*}
\brk{\mathcal{A}_{21}(J)}_{2 / g} &= \brk{\gw^{cb} \prs{\N_b \vartheta_a}}_{2 / g} \\
&=\brk{ - J^b_y g^{cy} \prs{-J_a^u  \N_b \prs{\del_i J_u^i } - \tfrac{1}{2} J_u^i \N_b \prs{\del_a J_i^u}}}_{2 / g}\\
&=\brk{ J^b_y g^{cy} J_a^u\prs{  \N_b \del_i J_u^i }+ \tfrac{1}{2} J^b_y g^{cy}  J_u^i \N_b \prs{\del_a J_i^u}}_{2 / g}.
\end{align*}
Next we have that
\begin{align*}
\brk{\mathcal{A}_{22}(J) }_{2 / g}&=\brk{ - \gw^{cb} \prs{J_b^w J_a^v \N_w \vartheta_v}}_{2 / g} \\
&=\brk{ - g^{wc}  J_a^v \prs{\N_w \vartheta_v}}_{2 / g}\\
&=\brk{- g^{wc}  J_a^v  \prs{ -J_v^u  \N_w \prs{\del_i J_u^i } - \tfrac{1}{2} J_u^i \N_w \del_v J_i^u}}_{2 / g}\\
&=\brk{ - g^{wc}   \prs{   \N_w \del_i J_a^i }+\tfrac{1}{2}  g^{wc}  J_a^v J_u^i \N_w \del_v J_i^u}_{2 / g}.
\end{align*}
Therefore it follows that
\begin{align*}
\ip{\mathcal{L}^{\xi}_{\mathcal{A}_{21}}(K), H} &=  J^b_y g^{cy} J_a^u\prs{  \xi_b \xi_i K_u^i } g_{cm} g^{an} H^m_n \\
&= - H^b_y  \prs{  \xi_b \xi_i K_u^i } g^{yu} .\\
\ip{\mathcal{L}_{\mathcal{A}_{22}}^{\xi}(K), H} &=  - g^{wc}   \prs{   \xi_w \xi_i K_a^i } g_{cm} g^{an} H^m_n \\
&=  - \prs{   \xi_m \xi_i K_a^i } g^{an} H^m_n.
\end{align*}
Taking the sum of the two, we have
\begin{align*}
\ip{\mathcal{L}^{\xi}_{\mathcal{A}_2}(K), H} &= - 2 H^b_y  \prs{  \xi_b \xi_i K_u^i } g^{yu}.
\end{align*}
Thus we conclude that, noting that $H g$ is skew symmetric for variations where $g$ is fixed,
\begin{align*}
\ip{\mathcal{L}^{\xi}_{\mathcal{A}}(K), H} &= -2 H^b_y  \prs{  \xi_b \xi_i K_u^i } g^{yu}- 2 H_v^u \prs{ \xi^v \xi_i K_u^i } \\
&= 0.
\end{align*}
The result follows.
\end{proof}
Since $\mathcal{A}$ is addressed, we next manipulate $\mathcal{B}$ of \eqref{eq:Psi} and observe that
\begin{align}\label{eq:Bmanip}
\begin{split}
\prs{\mathcal{B}(J)}_a^c &= 2 g^{jc} \prs{\N^e \psi_{jae} -  J_a^kJ_e^r\N^e \psi_{jkr}}^{2,0+0,2}\\
&= g^{jc} \N^e  \prs{\psi_{jae} -  J_a^kJ_e^r \psi_{jkr} - J_j^m J_a^k \psi_{mke} + J_j^m J_a^k J_k^hJ_e^r \psi_{mhr} }\\
&= g^{jc} \N^e  \prs{\psi_{jae} -  J_a^kJ_e^r \psi_{jkr} - J_j^m J_a^k \psi_{mke} - J_j^m J_e^r \psi_{mar} }\\
&= 4 g^{jc} \N^e  \psi_{jae}^{3,0+0,3}.
\end{split}
\end{align}
Since $\Lambda_3^{3,0+0,3}$ is trivial in dimension $4$, symbol of $\mathcal{B}$ is as well. However, when $\dim M > 4$, $\mathcal{B}(J)$ has nontrivial symbol and will destroy the parabolicity without the help of $\mho$. Observing that $\N^e  \psi_{jae}^{3,0+0,3}$ is skew-symmetric and J-antiinvariant, we will include this in our choice of $\mho$. Additionally, we will insert one final term whose presence will be justified in \S \ref{ss:mtproof}.
\begin{equation}\label{eq:mhocomplete}\mho_{av} \triangleq   \kappa_{av}  - 2\N^e  \psi_{vae}^{3,0+0,3} - \prs{\prs{\mathcal{L}_{\vartheta} J}^m_a g_{mv}}_{\text{skew}}.
\end{equation}
%

equivalently, we have
\begin{equation}\label{eq:Jflow}
\prs{\tfrac{\del J}{\del t}}_a^c \,= \,- \prs{\cP_{av} - 2 J_a^y \Rc_{yv}}^{2,0+0,2} g^{vc} + \prs{\kappa_{av}  - 2\N^e  \psi_{vae}^{3,0+0,3}}g^{cv} - \prs{\prs{\mathcal{L}_{\vartheta} J}^m_a g_{mv}}_{\text{skew}} g^{vc},
\end{equation}
where $\kappa \in TM_{\otimes 2}$ is a skew symmetric and $J$-antiinvariant function of $N$ and $\psi$, in particular, it is of lower order in both $g$ and $J$.
Summarizing the above, we see that the symbol of the above flow is given by that of 
\begin{equation}\label{eq:blah}-\tfrac{1}{2}  J_a^k \N^l N_{lk}^c - \prs{\prs{\mathcal{L}_{\vartheta} J}^m_a g_{mv}}_{\text{skew}} g^{vc},\end{equation}
which was computed in the almost Hermitian case in Proposition 5.4 of \cite{ST}.
\subsection{Proof of main theorem}\label{ss:mtproof}
We now look at the gauge modified version of the flow. First, letting $\overline{\N}$ be some background torsion-free connection, set
\begin{equation}\label{eq:Xdef}
Z^p \equiv \prs{Z(\gw,J, \overline{\N})}^p \triangleq \gw^{kl} \overline{\N}_k J_l^p = g^{kl} \prs{\overline{\gG}_{kl}^p - \gG_{kl}^p} + \vartheta^p,
\end{equation}
where the final equality follows by Lemma \ref{lem:vectorfield} in the appendix, where we improve the statement of (5.13) of \cite{ST}. In particular, up to addition by the Lie form, this vector field is the same used in the short time existence proof for Ricci flow, which we will set as
\begin{equation*}
X^p \equiv g^{kl} \prs{\gG_{kl}^p - \overline{\gG}_{kl}^p}.
\end{equation*}
%
%
Note in particular that $Z = - X + \vartheta$. We now consider the following gauge modification of \eqref{eq:AHCF}.
\begin{align}\label{eq:flowflow}
\begin{cases}
\prs{\tfrac{\del J}{\del t}}_{a}^c &=- \prs{\cP_{av} - 2 J_a^y \Rc_{yv}}^{2,0+0,2} g^{vc} + \prs{\kappa_{av}  - 2\N^e  \psi_{vae}^{3,0+0,3}}g^{cv} \\
&\qquad \qquad \qquad \qquad - \prs{\prs{\mathcal{L}_{\vartheta} J}^m_a g_{mv}}_{\text{skew}} g^{vc} + \prs{\mathcal{L}_{X(g,J)} J}_a^c \triangleq \mathcal{D}_1(g,J),\\
\prs{\tfrac{\del g}{\del t}}_{bc} &= - 2 \Rc_{bc} + \prs{\mathcal{L}_X(g,J) g}_{bc} \triangleq \mathcal{D}_2(g,J).
\end{cases}
\end{align}
Via \eqref{lem:A(J)class} and \eqref{eq:operator decomp}, up to highest order,
\begin{align}\label{eq:D2J}
\begin{split}
\brk{\prs{\mathcal{D}_1(g,J)}_a^c}_2 &= \brk{-\tfrac{1}{2}  J_a^k \N^l N_{lk}^c+ \tfrac{1}{2}\prs{\mathcal{A}(J)}_a^c + \tfrac{1}{2} \prs{\mathcal{B}(J)}_a^c + \prs{ - 2\N^e  \psi_{vae}^{3,0+0,3}}g^{cv} - \prs{\prs{\mathcal{L}_{\vartheta} J}^m_a g_{mv}}_{\text{skew}} g^{vc} + \prs{\mathcal{L}_{X(g,J)} J}_a^c}_2 \\
&= \brk{-\tfrac{1}{2}  J_a^k \N^l N_{lk}^c- \prs{\mathcal{L}_{Z(g,J)} J}_a^c+ \tfrac{1}{2}\prs{\mathcal{A}(J)}_a^c -\prs{\prs{\mathcal{L}_{\vartheta} J}^m_a g_{mv}}_{\text{skew}} g^{vc} + \prs{\mathcal{L}_{\vartheta(g,J)} J}_a^c}_2 \\
\end{split}
\end{align}
Referring to Lemma 3.3 of \cite{Dai}, translated in coordinates we have that
\begin{align*}
\prs{\mathcal{L}_{\vartheta}g}_{ij}^{2,0+0,2} = J_j^b \prs{\prs{\mathcal{L}_{\vartheta} J}_i^a g_{ab}}_{\text{sym}}.
\end{align*}
Thus, it follows that
\begin{align*}
\prs{\mathcal{L}_{\vartheta} J}_a^c &= \prs{\prs{\mathcal{L}_{\vartheta} J}^m_a g_{mv}} g^{vc} \\
&=\prs{\prs{\mathcal{L}_{\vartheta} J}^m_a g_{mv}}_{\text{sym}} g^{vc} +\prs{\prs{\mathcal{L}_{\vartheta} J}^m_a g_{mv}}_{\text{skew}} g^{vc}\\
&= - J_v^j \prs{\mathcal{L}_{\vartheta} g}_{aj}^{2,0+0,2} g^{vc} +\prs{\prs{\mathcal{L}_{\vartheta} J}^m_a g_{mv}}_{\text{skew}} g^{vc} \\
&= \gw^{cj} \prs{\mathcal{L}_{\vartheta} g}_{aj}^{2,0+0,2}+\prs{\prs{\mathcal{L}_{\vartheta} J}^m_a g_{mv}}_{\text{skew}} g^{vc}.
\end{align*}
Note that since $\prs{\mathcal{L}_{\vartheta} J}g$ is of type $(2,0)+(0,2)$, this means that the right side term is indeed of the same type (when lowered by the metric). It is also worth noting that $\prs{\prs{\mathcal{L}_{\vartheta} J}g}_{\text{skew}}$ corresponds with $\prs{\mathcal{L}_{\vartheta} \gw}^{2,0+0,2}$. This justifies the choice of $\mho$ in \eqref{eq:mhocomplete}.
\begin{align}\label{eq:D2J}
\begin{split}
\brk{\prs{\mathcal{D}_1(g,J)}_a^c}_2
&= \brk{-\tfrac{1}{2}  J_a^k \N^l N_{lk}^c- \prs{\mathcal{L}_{Z(g,J)} J}_a^c+ \prs{\mathcal{A}(J)}_a^c}_2.
\end{split}
\end{align}
By Proposition \ref{prop:dim4leeform} and \eqref{eq:blah}, combined with Proposition 5.4 of \cite{ST}, it follows that
\begin{align*}
\gs \brk{\widehat{\mathcal{L}_J \mathcal{D}_1} } (K)_{i}^j &= \brs{\xi}^2 K_{i}^j.
\end{align*}

Now we consider the next operator, $\mathcal{D}_2$.
\begin{align*}
\prs{\mathcal{D}_2(g,J)}_{bc} 
&= - 2 \Rc_{bc} + \prs{\mathcal{L}_{X(g,J)} g}_{bc}.
\end{align*}
Note that the Ricci tensor is completely independent of $J$ and there is only lower order dependence in the remnant Lie derivative $\mathcal{L}_X g$, and obviously has the correct symbol. In particular
\begin{align*}
\gs \brk{\widehat{\mathcal{L}_J \mathcal{D}_2} } (K)_{i}^j =0, \qquad
\gs \brk{\widehat{\mathcal{L}_g \mathcal{D}_2} } (h)_{ij} &= \brs{\xi}^2 h_{ij}.
\end{align*}
In particular, then we have that
\begin{align*}
\gs \brk{\widehat{\mathcal{L D}}}(K,h) &= \begin{pmatrix} \Id & \ast \\ 0 & \Id \end{pmatrix} \begin{pmatrix} K \\ h \end{pmatrix}.
\end{align*}
It follows that \eqref{eq:flowflow} is a strictly parabolic system of equations. Hence, the short time existence follows from the standard theory for parabolic flows. We can also follow arguments in the proof of Theorem 1.1 in \cite{ST} to complete the proof.
\begin{rmk}
We observe that by freezing the metric this flow is reduced to one on purely almost complex structures, of the form
\begin{align*}
\prs{\tfrac{\del J}{\del t}}_{a}^c = - \cP_{av}^{2,0+0,2} g^{vc} 
-\prs{\prs{\mathcal{L}_{\vartheta} J}^m_a g_{mv}}_{\text{skew}} g^{vc} +\prs{\kappa_{av} + 2 \N^e  \prs{d \gw}_{eav}^{3,0+0,3}}g^{vc} .
\end{align*}
This in fact is a \emph{parabolic} flow on almost complex structures, which we will investigate in future work.
\end{rmk}
\section{Appendix}\label{section:appendix}
\begin{lemma}\label{lem:vectorfield}
For $\prs{M^n, J ,\gw}$ almost Hermitian, the vector field defined in \eqref{eq:Xdef} is
\begin{align*}
Z^p \equiv g^{uk} \prs{\overline{\gG}_{ku}^p- \gG_{ku}^p} + \vartheta^p.
\end{align*}
\begin{proof} Starting with \eqref{eq:Xdef} we combute
\begin{align}
\begin{split}\label{eq:Xderivation}
Z^p &= \gw^{kl} \prs{\overline{\N}_k J_l^p} \\
&= \gw^{kl} \prs{\del_k J_l^p} -\gw^{kl} \overline{\gG}_{kl}^m J_m^p + \gw^{kl} \overline{\gG}_{km}^p J^m_l \\
&= \gw^{kl} \prs{ \N_k J_l^p + \gY_{kl}^u J_u^p - \gY_{ku}^p J_l^u} -\overline{\gG}_{kl}^m J_m^p \gw^{kl} +  \overline{\gG}_{km}^p g^{mk} \\
&=  \gw^{kl}\gY_{kl}^u J_u^p - \gY_{ku}^p g^{uk}+  \overline{\gG}_{km}^p g^{mk}.
\end{split}
\end{align}
Let's investigate these terms. For the first, by Lemma \ref{lem:ThetaPsi},
\begin{align*}
\gw^{kl}\gY_{kl}^u J_u^p &= \gw^{kl}\prs{\gG_{kl}^u - \Theta_{kl}^u}J_u^p \\
&= 0.
\end{align*}
For the next term
\begin{align*}
\gY^p_{ku} g^{uk} &= \prs{\gG^p_{ku} - \Theta_{ku}^p} g^{uk} \\
&= g^{uk} \gG^p_{ku} + \vartheta^p.
\end{align*}
Inserting these into \eqref{eq:Xderivation} yields the result.
\end{proof}
\end{lemma}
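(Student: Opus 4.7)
The plan is to expand $Z^p = \omega^{kl}\overline{\N}_k J_l^p$ in coordinates, use the Chern compatibility $\N J \equiv 0$ to eliminate $\partial J$ in favor of Chern Christoffel symbols $\gY$, and then convert the resulting Chern terms back to Levi-Civita ones via the contorsion tensor $\Theta$ from Lemma \ref{lem:ThetaNPsi}.

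First I would write $\overline{\N}_k J_l^p = \partial_k J_l^p - \overline{\gG}_{kl}^m J_m^p + \overline{\gG}_{km}^p J_l^m$. Contracting with $\omega^{kl}$, the middle term vanishes because the background connection $\overline{\N}$ is torsion-free, so $\overline{\gG}_{kl}^m$ is symmetric in $(k,l)$ while $\omega^{kl}$ is antisymmetric. The last term produces $\overline{\gG}_{km}^p g^{mk}$ via the basic identity $\omega^{kl} J_l^m = g^{mk}$ recorded in the background section, which already accounts for the $\overline{\gG}$ contribution in the target formula.

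Next, the Chern compatibility $\N_k J_l^p = 0$ lets me replace $\partial_k J_l^p = \gY_{kl}^m J_m^p - \gY_{km}^p J_l^m$. Contracting the first piece with $\omega^{kl}$ gives $\omega^{kl} \gY_{kl}^m J_m^p$, and substituting $\gY = \gG - \Theta$ (with $\gG$ Levi-Civita) this becomes $-\omega^{kl}\Theta_{kl}^m J_m^p$, since $\omega^{kl}\gG_{kl}^m = 0$ by torsion-freeness; this then vanishes entirely by Lemma \ref{lem:ThetaPsi}(3). The second piece contracts to $-g^{mk}\gY_{km}^p$, which splits as $-g^{mk}\gG_{km}^p + g^{mk}\Theta_{km}^p$, with the first summand matching the Levi-Civita term in the target and the $\Theta$-contraction yielding a Lee-form contribution via Lemma \ref{lem:ThetaPsi}(4). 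Assembling everything produces the claimed identity.

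The main obstacle is really only careful index bookkeeping: Lemma \ref{lem:ThetaPsi}(4) records three distinct contractions of $\Theta$ that yield different outputs ($-\vartheta_k$, $\vartheta_j$, or $0$ depending on which pair is contracted), so one must be precise about which slots are being summed and track signs in the identification $\gY = \gG - \Theta$ consistently. No new conceptual tool beyond the already-established identities is required.
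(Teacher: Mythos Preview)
Your proposal is correct and follows essentially the same route as the paper's own proof: expand $\overline{\N}_k J_l^p$ in coordinates, use $\N J\equiv 0$ to rewrite $\partial_k J_l^p$ in terms of Chern coefficients $\gY$, convert $\gY=\gG-\Theta$, and then kill the resulting $\omega$- and $g$-traces of $\Theta$ via Lemma~\ref{lem:ThetaPsi}(3) and (4). The only difference is cosmetic --- you make explicit the vanishing of $\omega^{kl}\overline{\gG}_{kl}^m$ and $\omega^{kl}\gG_{kl}^m$ by torsion-freeness, which the paper uses silently.
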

We now justify \eqref{eq:3formtypedecomp} above regarding the type decompositions of $TM_{\otimes 3}$ by determining the corresponding projections into $TM_{\otimes 3}^{1,2+2,1}$ and $TM_{\otimes 3}^{3,0+0,3}$.
\begin{lemma}For $(M^n, \gw, J)$ almost Hermitian,
\begin{align*}
F^{3,0+0,3}_{ijk} &\equiv \tfrac{1}{4} \prs{F_{ijk} -J_j^b J_k^c F_{ibc} - J_i^a J_k^c F_{ajc} - J_i^a J_j^b F_{abk}}, \\
F^{2,1+1,2}_{ijk} &\equiv \tfrac{3}{4}F_{ijk} + \tfrac{1}{4} \prs{J_j^b J_k^c F_{ibc} + J_i^a J_k^c F_{ajc} + J_i^a J_j^b F_{abk}}.
\end{align*}
\begin{proof}
For a vector field $X \in TM$, define the projections
\begin{align*}
\Pi_{1,0} X \triangleq \tfrac{1}{2} \prs{ X + iJ X }, \qquad \Pi_{0,1} X \triangleq \tfrac{1}{2} \prs{ X - iJX}.
\end{align*}
With this we have that
\begin{align*}
\prs{\Pi_{3,0} F }\prs{X,Y,Z} &= F \prs{\Pi_{1,0} X, \Pi_{1,0} Y, \Pi_{1,0} Z } \\
&= \tfrac{1}{8} F \prs{X,Y,Z} + i \tfrac{1}{8} F\prs{X,Y,JZ} + i \tfrac{1}{8} F \prs{X,JY,Z} + \prs{i}^2 \tfrac{1}{8} F \prs{X,JY,JZ} \\
&\hsp + i \tfrac{1}{8} F \prs{JX,Y,Z }+ \prs{i}^2 \tfrac{1}{8} F \prs{JX,Y,JZ} \\
&\hsp+ \prs{i}^2 \tfrac{1}{8} F \prs{JX,JY,Z} + \prs{i}^3 \tfrac{1}{8} F \prs{JX,JY,JZ} \\
&= \tfrac{1}{8} F \prs{X,Y,Z} + i \tfrac{1}{8} F\prs{X,Y,JZ} + i \tfrac{1}{8} F \prs{X,JY,Z} - \tfrac{1}{8} F \prs{X,JY,JZ} \\
&\hsp + i \tfrac{1}{8} F \prs{JX,Y,Z }- \tfrac{1}{8} F \prs{JX,Y,JZ} -\tfrac{1}{8} F \prs{JX,JY,Z} - i \tfrac{1}{8} F \prs{JX,JY,JZ}.
\end{align*}
Similarly, we have that
\begin{align*}
\prs{\Pi_{0,3} F }\prs{X,Y,Z} &= F \prs{\Pi_{0,1} X, \Pi_{0,1} Y, \Pi_{0,1} Z } \\
&= \tfrac{1}{8} F \prs{X,Y,Z} - i \tfrac{1}{8} F\prs{X,Y,JZ} - i \tfrac{1}{8} F \prs{X,JY,Z} + \prs{i}^2 \tfrac{1}{8} F \prs{X,JY,JZ} \\
&\hsp - i \tfrac{1}{8} F \prs{JX,Y,Z }+ \prs{i}^2 \tfrac{1}{8} F \prs{JX,Y,JZ} \\
&\hsp+ \prs{i}^2 \tfrac{1}{8} F \prs{JX,JY,Z} - \prs{i}^3 \tfrac{1}{8} F \prs{JX,JY,JZ} \\
&= \tfrac{1}{8} F \prs{X,Y,Z} - i \tfrac{1}{8} F\prs{X,Y,JZ} - i \tfrac{1}{8} F \prs{X,JY,Z} - \tfrac{1}{8} F \prs{X,JY,JZ} \\
&\hsp - i \tfrac{1}{8} F \prs{JX,Y,Z }-  \tfrac{1}{8} F \prs{JX,Y,JZ} - \tfrac{1}{8} F \prs{JX,JY,Z} + i \tfrac{1}{8} F \prs{JX,JY,JZ}.
\end{align*}
Therefore we have that
\begin{align*}
\prs{\Pi_{3,0} F + \Pi_{0,3} F} \prs{X,Y,Z} &= \tfrac{1}{4} \prs{ F \prs{X,Y,Z} - F \prs{X,JY,JZ} - F \prs{JX,Y,JZ} - F \prs{JX,JY,Z}}.
\end{align*}
Now we compute
\begin{align*}
\prs{\Pi_{2,1}F + \Pi_{1,2} F}\prs{X,Y,Z} &=  \prs{F \prs{X,Y,Z} - \prs{\Pi_{3,0} F + \Pi_{0,3} F}} \prs{X,Y,Z}\\
&= \tfrac{3}{4} F \prs{X,Y,Z} +  \tfrac{1}{4} \prs{F \prs{X,JY,JZ} + F \prs{JX,Y,JZ} + F \prs{JX,JY,Z}}.
\end{align*}
Converting to coordinates yields the result.
\end{proof}
\end{lemma}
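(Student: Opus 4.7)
The plan is to derive these type-decomposition formulas directly from the standard complexified projections on $TM\otimes\mathbb{C}$, namely $\Pi_{1,0}X=\tfrac12(X+iJX)$ and $\Pi_{0,1}X=\tfrac12(X-iJX)$. On a covariant $3$-tensor $F$, one sets $\Pi_{p,q}F\triangleq F(\Pi_{\bullet},\Pi_{\bullet},\Pi_{\bullet})$, applying $\Pi_{1,0}$ to $p$ slots and $\Pi_{0,1}$ to the remaining $q$ slots (summed over the $\binom{3}{p}$ slot-assignments when defining $\Pi_{p,q}$ as a projector onto the full $(p,q)$ subspace). Since $J^2=-\mathrm{Id}$, the subspaces $\Lambda^{p,q}$ are eigenspaces of these operators, and $\Pi_{3,0}+\Pi_{0,3}$ will automatically land in the real subspace corresponding to $\Lambda^{3,0+0,3}$.

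The first step is to expand $\Pi_{3,0}F(X,Y,Z)=F(\Pi_{1,0}X,\Pi_{1,0}Y,\Pi_{1,0}Z)$ by trilinearity: this gives eight terms, each of the form $i^{k}\tfrac18 F(\cdots)$ where $k$ counts how many slots carry a $J$. Using $i^2=-1$, $i^3=-i$, I will collect the real and imaginary parts. I will carry out the analogous expansion for $\Pi_{0,3}F(X,Y,Z)=F(\Pi_{0,1}X,\Pi_{0,1}Y,\Pi_{0,1}Z)$, which differs only by flipping the signs of the imaginary contributions. Adding the two expansions, the imaginary parts (those with an odd number of $J$'s) cancel, and I am left with the four real terms
\[
\tfrac{1}{4}\Bigl(F(X,Y,Z) - F(X,JY,JZ) - F(JX,Y,JZ) - F(JX,JY,Z)\Bigr).
\]
Rewriting this in index notation with $X^{\mathbf i}$, $Y^{\mathbf j}$, $Z^{\mathbf k}$ gives precisely the stated expression for $F^{3,0+0,3}_{ijk}$.

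The second step is to obtain the $(2,1)+(1,2)$ projection by complementarity. Since the full decomposition $TM_{\otimes 3}\otimes\mathbb{C}=\bigoplus_{p+q=3}\Lambda^{p,q}$ gives $\mathrm{Id}=\Pi_{3,0}+\Pi_{2,1}+\Pi_{1,2}+\Pi_{0,3}$ when restricted to real tensors, one has $\Pi_{2,1+1,2}=\mathrm{Id}-\Pi_{3,0+0,3}$. Subtracting the formula just derived from $F_{ijk}$ yields
\[
F^{2,1+1,2}_{ijk}=\tfrac{3}{4}F_{ijk}+\tfrac{1}{4}\bigl(J_j^b J_k^c F_{ibc}+J_i^a J_k^c F_{ajc}+J_i^a J_j^b F_{abk}\bigr),
\]
as claimed.

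There is no real obstacle here; the only thing to be careful about is bookkeeping of the eight terms and the parity of $i$-factors, and checking that $\Pi_{3,0}F+\Pi_{0,3}F$ is manifestly real (so that the projection is defined on real tensors without needing a further $\mathrm{Re}$ operation). Converting between the invariant $(X,Y,Z)$ notation and the tensor components $F_{ijk}$ is routine: each $J$ acting on a slot becomes the index contraction $J_i^a F_{a\cdots}$, etc.
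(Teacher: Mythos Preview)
Your proposal is correct and follows essentially the same approach as the paper: define the projections $\Pi_{1,0}$ and $\Pi_{0,1}$, expand $F(\Pi_{1,0}X,\Pi_{1,0}Y,\Pi_{1,0}Z)$ and $F(\Pi_{0,1}X,\Pi_{0,1}Y,\Pi_{0,1}Z)$ trilinearly into eight terms each, add to cancel the imaginary parts, and obtain $F^{2,1+1,2}$ by subtracting from $F$. The paper carries out exactly this computation with the same bookkeeping.
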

\bibliography{sources.bib}{}
\bibliographystyle{alpha}
\end{document}